\numberwithin{equation}{section}
\newtheorem{theorem}{Theorem}[section]
\newtheorem{lemma}[theorem]{Lemma}
\newtheorem{corollary}[theorem]{Corollary}
\newtheorem{remark}[theorem]{Remark}
\newtheorem{proposition}[theorem]{Proposition}
\title{Jordan and Jordan Higher All-derivable Points of Some Algebras}
\author[,a]{Jiankui Li\footnote{Corresponding author. Email address: jiankuili@yahoo.com}}
\author[b]{Zhidong Pan}
\author[a]{Qihua Shen}
\affil[a]{\small Department of Mathematics, East China University of Science and Technology\\

Shanghai 200237, PR China}
\affil[b]{\small Department of Mathematics, Saginaw Valley State University\\

University Center, MI 48710, USA}
\date{}
\begin{document}

\maketitle \abstract
 In this paper, we characterize Jordan derivable mappings in terms
 of Peirce decomposition and determine Jordan all-derivable points
 for some general bimodules. Then we generalize the results to the
 case of Jordan higher derivable mappings. An immediate application
 of our main results shows that for a nest $\mathcal{N}$ on a Banach
 $X$ with the associated nest algebra $alg\mathcal{N}$, if there exists
 a non-trivial element in $\mathcal{N}$ which is complemented in $X$,
  then every $C\in alg\mathcal{N}$ is a Jordan all-derivable point of
  $L(alg\mathcal{N}, B(X))$ and a Jordan higher all-derivable point of $L(alg\mathcal{N})$.
\

{\sl Keywords} : Jordan all-derivable point, Jordan derivation, Jordan
higher all-derivable point, Jordan higher derivation; nest algebra\

{\sl 2010 AMS classification} : Primary 47L35; Secondly 16W25 \

\section{Introduction}\

Let $\mathcal{A}$ be a unital algebra and  $\mathcal{M}$ be a unital
$\mathcal{A}$-bimodule. We denote $C(\mathcal{A},\mathcal{M})=\{M\in
\mathcal{M}: AM=MA~ \textrm{for~ every}~A\in \mathcal{A}\}$ and
$L(\mathcal{A},\mathcal{M})$ the set of all linear mappings from
$\mathcal{A}$ to $\mathcal{M}$. When $\mathcal{M}=\mathcal{A}$, we
relabel $L(\mathcal{A},\mathcal{M})$ as $L(\mathcal{A})$. Let
$\delta\in L(\mathcal{A},\mathcal{M})$. $\delta$ is called a
\textit{derivation} if $\delta(AB)=\delta(A)B+A\delta(B)$ for all
$A,B\in \mathcal{A}$; it is a \textit{Jordan derivation} if
$\delta(AB+BA)=\delta(A)B+A\delta(B)+\delta(B)A+B\delta(A)$ for all
$A,B\in \mathcal{A}$; it is a \textit{generalized derivation}
if there exists an $M_{\delta}\in C(\mathcal{A},\mathcal{M})$ such
that
$\delta(AB)=\delta(A)B+A\delta(B)-M_{\delta}AB$
for all $A,B\in \mathcal{A}$. For any fixed $M\in \mathcal{M}$, each
mapping of the form $\delta_{M}(A)=MA-AM$ for every $A\in
\mathcal{A}$ is called an \textit{inner derivation}. Clearly each
inner derivation is a derivation and each derivation in a Jordan
derivation. But the converse is not true in general. The questions
of characterizing derivations and Jordan derivations have received
considerable attention from several authors, who revealed the
relations among derivations, Jordan derivations as well as inner
derivations (see for example
\cite{CHDM,semiprime,prime,Johnson,LU2,Moore,JDOTA}, and the
references therein).

In general there are two directions in the study of the local
actions of derivations of operator algebras. One is the well known
local derivation problem (see for example
\cite{local3,local2,local1,Zhu6,Zhu5}). The other is to study
conditions under which derivations of operator algebras can be
completely determined by the action on some subsets of operators
(see for example \cite{CHDM,Chebotar, LI1,Hou2,  ZHOU, ZHUJUN2}). A
mapping $\delta\in L(\mathcal{A},\mathcal{M})$ is called a
\textit{Jordan derivable mapping at $C\in \mathcal{A}$} if
$\delta(AB+BA)=\delta(A)B+A\delta(B)+\delta(B)A+B\delta(A)$ for all
$A,B\in \mathcal{A}$ with $AB=C$. It is obvious that a linear
mapping is a Jordan derivation if and only if it is Jordan derivable
at all points. It is natural and interesting to ask the question
whether or not a linear mapping is a Jordan derivation if it is
Jordan derivable only at one given point. If such a point exists, we
call this point a Jordan all-derivable point. To be more precise, an
element $C\in \mathcal{A}$ is called a \textit{Jordan all-derivable
point} of $L(\mathcal{A},\mathcal{M})$ if every Jordan derivable
mapping at $C$ is a Jordan derivation. It is quite surprising that
there do exist Jordan all-derivable points for some algebras. An and
Hou \cite{Hou} show that under some mild conditions on unital prime
ring or triangular ring $\mathcal{A}$, $I$ is a Jordan all-derivable
point of $L(\mathcal{A})$. Jiao and Hou \cite{Hou3} study Jordan
derivable mappings at zero point on nest algebras. Zhao and Zhu
\cite{ZHUJUN} prove that $0$ and $I$ are Jordan all-derivable points
of the triangular algebra. In \cite{submit}, the authors study some
derivable mappings in the generalized matrix algebra $\mathcal{A}$,
and show that $0$, $P$ and $I$ are Jordan all-derivable points,
where $P$ is the standard non-trivial idempotent. In \cite{ZHUJUN},
Zhao and Zhu prove that every element in the algebra of all $n\times
n$ upper triangular matrices over the complex field $\mathbb{C}$ is a Jordan all-derivable point. In
Section 2, we give some general characterizations of Jordan
derivable mappings, which will be used to determine Jordan
all-derivable points for some general bimodules.

Let $\mathcal{A}$ be a unital algebra and $\mathbb{N}$ be the set of non-negative integers. A sequence of mappings
$\{d_i\}_{i\in\mathbb{N}}\in L(\mathcal{A})$ with $d_0=I_\mathcal{A}$ is called a
\textit{higher derivation} if $d_{n}(AB)=\sum_{i+j=n}d_i(A)d_j(B)$
for all $A,B\in \mathcal{A}$; it is called a \textit{Jordan higher
derivation} if
$d_{n}(AB+BA)=\sum_{i+j=n}(d_i(A)d_j(B)+d_i(B)d_j(A))$ for all
$A,B\in \mathcal{A}$. With the development of derivations, the study
of higher and Jordan higher derivations has attracted much attention
as an active subject of research in operator algebras, and the local
action problem ranks among in the list. A sequence of mappings
$\{d_i\}_{i\in\mathbb{N}}\in L(\mathcal{A})$ with $d_0=I_\mathcal{A}$ is called
\textit{Jordan higher derivable at $C\in \mathcal{A}$} if
$d_{n}(AB+BA)=\sum_{i+j=n}(d_i(A)d_j(B)+d_i(B)d_j(A))$ for all
$A,B\in \mathcal{A}$ with $AB=C$. An element $C\in \mathcal{A}$ is
called a \textit{Jordan higher all-derivable point} if every
sequence of Jordan higher derivable mappings at $C$ is a Jordan
higher derivation. In Section 3, we generalize the results in
Section 2 to the case of Jordan higher derivable mappings.
Meanwhile, we find the connection between Jordan all-derivable
points (all-derivable points, S-Jordan all derivable points,
respectively) and Jordan higher all-derivable points (higher
all-derivable points, S-Jordan higher all-derivable points,
respectively). We also discuss the automatic continuity property of
(Jordan) higher derivations.

Let $X$ be a complex Banach space and $B(X)$ be the set of all
bounded linear operators on $X$. For any non-empty subset
$L\subseteq X$, $L^\perp$ denotes its annihilator, that is,
$L^\perp=\{f\in X^*: f(x)=0~\mathrm{for}~\mathrm{all}~x\in L\}$. By
a \textit{subspace lattice} on $X$, we mean a collection
$\mathcal{L}$ of closed subspaces of $X$ with (0) and $X$ in
$\mathcal{L}$ such that for every family $\{M_r\}$ of elements of
$\mathcal{L}$, both $\cap M_r$ and $\vee M_r$ belong to
$\mathcal{L}$. For a subspace lattice $\mathcal{L}$ of $X$, let
alg$\mathcal{L}$ denote the algebra of all operators in $B(X)$ that
leave members of $\mathcal{L}$ invariant. A totally ordered subspace
lattice is called a \textit{nest}. If $\mathcal{L}$ is a nest, then
alg$\mathcal{L}$ is called a \textit{nest algebra}, see \cite{NEST}
for more on nest algebras. When $X$ is a separable Hilbert space
over the complex field $\mathbb{C}$, we change it to $H$. In a
Hilbert space, we disregard the distinction between a closed
subspace and the orthogonal projection onto it. An immediate but
noteworthy application of our main result shows that for a nest
$\mathcal{N}$ on a Banach $X$ with the associated nest algebra
$alg\mathcal{N}$, if there exists a non-trivial element in
$\mathcal{N}$ which is complemented in $X$, then every $C\in
alg\mathcal{N}$ is a Jordan all-derivable point of
$L(alg\mathcal{N}, B(X))$ and a Jordan higher all-derivable point of
$L(alg\mathcal{N})$.

\section{Jordan derivable mappings}\

We start with Peirce decomposition of algebras and its bimodules.

Let $\mathcal{A}$ be a unital algebra and $\mathcal{M}$ be a unital
$\mathcal{A}$-bimodule.  For any idempotent $E_1\in \mathcal{A}$,
let $E_2=I-E_1$. For $i,j\in\{1,2\},$ define
$\mathcal{A}_{ij}=E_i\mathcal{A}E_j$, which gives the Peirce
decomposition of $\mathcal{A}: A=A_{11}+A_{12}+A_{21}+A_{22}.$
Similarly, we define $\mathcal{M}_{ij}=E_i\mathcal{M}E_j$. We say
$\mathcal{A}_{ij}$ is \textit{left faithful} with respect to
$\mathcal{M}$ if for any $M\in \mathcal{M}$, the condition
$M\mathcal{A}_{ij}=\{0\}$  implies $ME_i=0$ and $\mathcal{A}_{ij}$
is \textit{right faithful} with respect to $\mathcal{M}$ if the
condition $\mathcal{A}_{ij}M=\{0\}$  implies $E_jM=0.$ We say
$\mathcal{A}_{ij}$ is \textit{faithful} with respect to
$\mathcal{M}$ if it is both left faithful and right faithful. In
this paper, we will always use the notations $P=E_1$ and
$Q=E_2=I-E_1$ for convenience.

In this section, we will assume $\mathcal{A}$ is a unital
algebra over a  field $\mathbb{F}$ of characteristic not equal to $2$ and $|\mathbb{F}|\geq4$,
$\mathcal{M}$ is a unital $\mathcal{A}$-bimodule and $\mathcal{A}$
has a non-trivial idempotent $P=E_1\in \mathcal{A}$ such that the
corresponding Peirce decomposition has the following property: Every
element of $A_{11}$ is a linear combination of invertible elements
of $A_{11}$ and every element of $A_{22}$ is a linear combination of
invertible elements of $A_{22}$. Algebras satisfying these
assumptions include all finite-dimensional unital algebras over an
algebraically closed field and all unital Banach algebras.

For any $A,B\in \mathcal{A}$, define $A\circ B=AB+BA$; similarly, for any
$A\in \mathcal A$ and $M\in \mathcal M$, define $A\circ M=AM+MA.$
For $A,B, D, E\in \mathcal{A}$, we say any $\delta \in L(\mathcal A, \mathcal M)$
\textit{differentiates} $A\circ B$ if $\delta(A\circ B)=\delta(A)\circ B+A\circ \delta(B)$
; we say $\delta $ \textit{differentiates} $A\circ B+ C\circ D$ if
$\delta (A\circ B+ C\circ D)=\delta (A)\circ B+A\circ \delta (B)+\delta (D)\circ E+D\circ \delta (E)$.
We see that a mapping $\delta\in L(\mathcal{A},\mathcal{M})$ is a Jordan derivation if and only if
$\delta $ differentiates $A\circ B$ for all
$A,B\in \mathcal{A}$, and $\delta$ is Jordan derivable at $C\in \mathcal{A}$
if and only if $\delta $ differentiates $A\circ B$ for all
$A,B\in \mathcal{A}$ with $AB=C$.

The following proposition is elementary, we omit the proof.

\begin{proposition}\label{0000}
Let $\mathcal{V}$ be a vector space over a field $\mathbb{F}$ with
$|\mathbb{F}|>n$.  For any fixed $v_i\in \mathcal{V}, i=0, 1, \cdots , n$, define
$p(t)=\sum_{i=0}^n v_i t^i $ for $t\in \mathbb{F}$. If $p(t)=0$ has
at least $n+1$ distinct solutions in $\mathbb{F}$, then $v_i=0, \ i=0, 1, \cdots , n$.
\end{proposition}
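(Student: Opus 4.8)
The plan is to reduce this vector-valued statement to the classical fact that a polynomial of degree at most $n$ over a field cannot have more than $n$ roots, packaging the reduction through the invertibility of a Vandermonde matrix.

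First I would fix $n+1$ pairwise distinct solutions $t_0, t_1, \ldots, t_n \in \mathbb{F}$ of $p(t)=0$; these exist by hypothesis (and this is exactly where the assumption $|\mathbb{F}|>n$ is forced, since a field with at most $n$ elements could not supply them). Evaluating $p$ at each $t_k$ yields the $n+1$ vector equations $\sum_{i=0}^n t_k^{\,i}\, v_i = 0$ for $k=0,1,\ldots,n$. I would then view these as a single linear system whose coefficient matrix is the Vandermonde matrix $V=(t_k^{\,i})_{0\le k,i\le n}$, with entries in $\mathbb{F}$, acting on the column $(v_0,\ldots,v_n)^{\mathrm{T}} \in \mathcal{V}^{\,n+1}$ of unknown vectors.

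The key step is that $\det V = \prod_{0\le j<k\le n}(t_k-t_j)$ is nonzero, because the $t_k$ are pairwise distinct and $\mathbb{F}$, being a field, is an integral domain. Hence $V$ is invertible over $\mathbb{F}$; since $V^{-1}$ again has scalar entries, it acts on $\mathcal{V}^{\,n+1}$, and multiplying the homogeneous system by $V^{-1}$ forces $v_i = 0$ for every $i$.

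The only point needing a moment's care — and the closest thing to an obstacle — is that the coefficients $v_i$ are vectors rather than scalars, so one cannot literally invoke ``a nonzero polynomial of degree $n$ has at most $n$ roots.'' This is resolved precisely by the observation that the Vandermonde matrix has entries in $\mathbb{F}$, so its inverse operates entrywise on the $\mathbb{F}$-vector space $\mathcal{V}$. As an alternative I could apply an arbitrary linear functional $f\in\mathcal{V}^*$ to obtain the scalar polynomial $\sum_i f(v_i)\,t^i$, which has degree at most $n$ yet $n+1$ roots and therefore vanishes identically; since functionals separate the points of $\mathcal{V}$, each $v_i=0$. I would present the Vandermonde argument as the main line, since it is fully self-contained and avoids any appeal to dual spaces.
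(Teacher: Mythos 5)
Your proof is correct, and in fact the paper offers nothing to compare it against: the authors state only that ``the following proposition is elementary, we omit the proof,'' so your Vandermonde argument supplies precisely the standard reasoning they left implicit. The argument is complete as written: evaluating at $n+1$ pairwise distinct roots $t_0,\dots,t_n$ gives the system $\sum_{i=0}^n t_k^{\,i}v_i=0$, the Vandermonde determinant $\prod_{0\le j<k\le n}(t_k-t_j)$ is nonzero in the field $\mathbb{F}$, and since $V^{-1}$ has scalar entries it acts coordinatewise on $(v_0,\dots,v_n)^{\mathrm{T}}\in\mathcal{V}^{\,n+1}$, forcing every $v_i=0$; your alternative route through linear functionals is equally valid, since the algebraic dual of any vector space separates points (choose a Hamel basis). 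Your side remark is also accurate: the hypothesis $|\mathbb{F}|>n$ is already implied by the existence of $n+1$ distinct solutions, so it serves only to make the hypothesis of the proposition satisfiable rather than doing independent work in the proof.
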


A simple application of Proposition $2.1$ yields the following proposition, which will be used repeatedly in this paper.

\begin{proposition}\label{0001}
Suppose $A, B, D, E, K, L \in \mathcal{A}$ and $\delta\in L(\mathcal{A},\mathcal{M})$.

(a) If $\delta $ differentiates $(tA+B)\circ (tD+E)$ for at least three $t\in \mathbb{F}$, then
$\delta $ differentiates $A\circ D$,  $B\circ E$, and $ A\circ E+B\circ D$; in particular, if $A=0$ then $\delta $
differentiates $B\circ D$.

(b) If $\delta $ differentiates  $A\circ (tD+E)+B\circ (tK+L)$ for at least two $t\in \mathbb{F}$, then
 $\delta $ differentiates  $A\circ D+B\circ K$ and  $A\circ E+B\circ L$.
\end{proposition}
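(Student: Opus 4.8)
The plan is to reduce both statements to the polynomial principle recorded in Proposition~\ref{0000}, applied with $n=2$ for part (a) and $n=1$ for part (b); the standing hypothesis $|\mathbb{F}|\geq 4$ guarantees $|\mathbb{F}|>n$ in each case. The underlying observation is that the circle product is bilinear, so that $(tA+B)\circ(tD+E)$ and $A\circ(tD+E)+B\circ(tK+L)$ are polynomials in $t$ whose coefficients are themselves circle products, and that the ``differentiation defect'' inherits this polynomial structure coefficient-by-coefficient once the linear map $\delta$ is applied.

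For part (a), I would first expand, using bilinearity of $\circ$, the identity $(tA+B)\circ(tD+E)=t^2(A\circ D)+t(A\circ E+B\circ D)+(B\circ E)$. Applying $\delta$ to this and separately expanding $\delta(tA+B)\circ(tD+E)+(tA+B)\circ\delta(tD+E)$ by linearity of $\delta$ and bilinearity of $\circ$, the defect $f(t):=\delta\big((tA+B)\circ(tD+E)\big)-\delta(tA+B)\circ(tD+E)-(tA+B)\circ\delta(tD+E)$ becomes a quadratic polynomial $f(t)=v_2t^2+v_1t+v_0$ with coefficients in $\mathcal{M}$. A direct matching of powers of $t$ shows that $v_2$ is exactly the defect for $A\circ D$, that $v_0$ is the defect for $B\circ E$, and that $v_1$ is the defect for $A\circ E+B\circ D$ in the sense of the definition of differentiating a sum. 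Since $f$ vanishes at three distinct values of $t$ by hypothesis, Proposition~\ref{0000} forces $v_0=v_1=v_2=0$, which is precisely the three asserted differentiation statements; the special case $A=0$ is then just the observation that $A\circ E+B\circ D$ collapses to $B\circ D$.

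Part (b) is handled identically, but with one lower degree. Here the expansion $A\circ(tD+E)+B\circ(tK+L)=t(A\circ D+B\circ K)+(A\circ E+B\circ L)$ makes the corresponding defect a linear polynomial $g(t)=w_1t+w_0$, whose coefficients $w_1$ and $w_0$ are, respectively, the defects for $A\circ D+B\circ K$ and for $A\circ E+B\circ L$. Two distinct roots together with Proposition~\ref{0000} (now with $n=1$) give $w_1=w_0=0$, i.e.\ the two claimed statements.

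There is no genuine obstacle here: the entire content is the polynomial principle, and the only thing requiring care is the bookkeeping --- tracking which monomial in $t$ produces which circle product, and verifying that each coefficient of the defect matches, term for term, the right-hand side of the definition of ``$\delta$ differentiates $A\circ B+C\circ D$.'' Because $\delta$ is linear and $\circ$ is bilinear, this matching is forced, so once the expansions are written out correctly the conclusion is immediate.
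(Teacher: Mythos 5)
Your proof is correct and matches the paper's intended argument exactly: the paper gives no written proof, stating only that Proposition~2.2 is ``a simple application of Proposition~2.1,'' which is precisely your reduction of the differentiation defect to a polynomial of degree $2$ (resp.\ $1$) in $t$ and coefficient matching. Your bookkeeping of the coefficients $v_2, v_1, v_0$ and $w_1, w_0$ is accurate, so nothing is missing.
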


Now we characterize Jordan-derivable mappings in terms of Peirce decomposition as follows.

\begin{theorem}\label{0002}
For any $C\in\mathcal{A}$ such that $C_{21}=0$, if $\Delta\in
L(\mathcal{A},\mathcal{M})$ is  Jordan-derivable at $C$, then there
exists a $\delta\in L(\mathcal{A},\mathcal{M})$ such that
$\Delta-\delta$ is an inner derivation and the following hold:

(a)~~$\delta(P)A_{12}=A_{12}\delta(Q)$ for any $A_{12}\in \mathcal{A}_{12}$.\

(b)~~$A_{12}\delta(A_{12})=\delta(A_{12})A_{12}=0$ for any $A_{12}\in \mathcal{A}_{12}$.\

(c)~~$\delta(\mathcal{A}_{11})\subset \mathcal{M}_{11}$, $\delta(\mathcal{A}_{22})\subset \mathcal{M}_{22}$.

If $\mathcal{A}_{12}$ is left faithful, then

(d)~~$\delta(P)\in C(\mathcal{A}_{11},\mathcal{M})$.

(e)~~$\delta|_{\mathcal{A}_{11}}$ is a generalized derivation from $\mathcal{A}_{11}$ to $\mathcal{M}_{11}$.

If $\mathcal{A}_{12}$ is right faithful, then

(f)~~$\delta(Q)\in C(\mathcal{A}_{22},\mathcal{M})$.

(g)~~$\delta|_{\mathcal{A}_{22}}$ is a generalized derivation from $\mathcal{A}_{22}$ to $\mathcal{M}_{22}$.
\end{theorem}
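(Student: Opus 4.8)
The plan is to peel off an inner derivation so that the normalized map sends $P$ to a diagonal element, and then to feed carefully chosen factorizations of $C$ into Proposition~\ref{0001}. For the normalization, set $M = Q\Delta(P)P - P\Delta(P)Q$ and $\delta = \Delta - \delta_M$ with $\delta_M(A)=MA-AM$; a direct computation gives $\delta(P) = P\Delta(P)P + Q\Delta(P)Q \in \mathcal{M}_{11}+\mathcal{M}_{22}$, i.e. $\delta(P)$ is diagonal. Since every inner derivation differentiates every $A\circ B$, the map $\delta=\Delta-\delta_M$ still differentiates $A\circ B$ whenever $AB=C$, so $\delta$ is Jordan-derivable at $C$ and $\Delta-\delta=\delta_M$ is inner as required; all remaining work is to verify (a)--(g) for this $\delta$. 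The computational engine is the observation that if $AD=0$, $AE+BD=0$ and $BE=C$, then $(tA+B)(tD+E)=t^2AD+t(AE+BD)+BE=C$ for every $t\in\mathbb{F}$, so $\delta$ differentiates $(tA+B)\circ(tD+E)$ for all $t$ and Proposition~\ref{0001}(a) then yields that $\delta$ differentiates $A\circ D$, $B\circ E$ and $A\circ E+B\circ D$ separately. Choosing the data $(A,B,D,E)$ to target each conclusion and comparing Peirce components produces the required identities.

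For (b) and part of (a) the natural choice is $A=A_{12}$, $B=I$, $D=-A_{12}C$, $E=C$, i.e. the invertible perturbation $I+tA_{12}$ with $(I+tA_{12})(I-tA_{12})C=C$; differentiating $A_{12}\circ(A_{12}C)$ and the combination $A_{12}\circ C - I\circ(A_{12}C)$, reading these off in Peirce coordinates and combining with the diagonality of $\delta(P)$, contributes both $\delta(P)A_{12}=A_{12}\delta(Q)$ and the vanishings $A_{12}\delta(A_{12})=\delta(A_{12})A_{12}=0$. For (c) one uses the factorization $C=(G+Q)\bigl(G^{-1}C_{11}+G^{-1}C_{12}+C_{22}\bigr)$ for $G$ invertible in the corner $\mathcal{A}_{11}$ (and the symmetric factorization for $\mathcal{A}_{22}$); differentiating and isolating the off-diagonal Peirce parts forces $\delta(G)$ to be diagonal, and the standing hypothesis that every corner element is a linear combination of invertible ones, together with the linearity of $\delta$, promotes this to all of $\mathcal{A}_{11}$ and $\mathcal{A}_{22}$.

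The faithful cases (d)--(g) all run through a single cancellation principle: to prove an identity whose left-hand side $R$ lies in $\mathcal{M}_{11}$ it suffices to show $R\,\mathcal{A}_{12}=\{0\}$, since left-faithfulness then gives $RE_1=R=0$ (and dually on the right for (f),(g)). Part (d) is then immediate from (a): for $A_{11}\in\mathcal{A}_{11}$ and any $A_{12}$, one has $(A_{11}\delta(P)-\delta(P)A_{11})A_{12}=A_{11}(A_{12}\delta(Q))-(A_{11}A_{12})\delta(Q)=0$ by applying (a) to both $A_{12}$ and $A_{11}A_{12}\in\mathcal{A}_{12}$, and since $A_{11}\delta(P)-\delta(P)A_{11}\in\mathcal{M}_{11}$ this yields $\delta(P)\in C(\mathcal{A}_{11},\mathcal{M})$; (f) is symmetric. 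For (e) and (g) one first derives, again via the parameter trick, the mixed differentiation relations governing the $\mathcal{A}_{12}$-valued products $A_{11}\circ X_{12}$ with $\delta(P)$ playing the role of $M_\delta$, then multiplies the candidate generalized-derivation identity on the right by $A_{12}$ to reduce it to a relation about $A_{11}B_{11}A_{12}$, cancels $A_{12}$ by faithfulness, and finally reduces to invertible corner elements.

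The main obstacle I anticipate is twofold. First, every factorization must reproduce the \emph{full} $C=C_{11}+C_{12}+C_{22}$ rather than a single corner; the hypothesis $C_{21}=0$ is precisely what makes enough such factorizations available (as in the $G+Q$ factorization above), and keeping them simultaneously compatible with the linear parameter trick and with clean Peirce bookkeeping is the delicate part. Second, for (b) the symmetric relation $A_{12}\delta(A_{12})+\delta(A_{12})A_{12}=0$ does not by itself split into the two asserted vanishings; the separation requires an auxiliary relation coupling $\delta(A_{12})$ to the now-diagonal $\delta(P)$, which is exactly why the normalization is performed first. Finally, the passage from Jordan-derivability at the single point $C$ to statements about arbitrary corner products in (e) and (g) is available only through the faithfulness-cancellation device, so the faithfulness hypotheses are doing essential work there.
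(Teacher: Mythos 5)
Your high-level architecture matches the paper's --- subtract an inner derivation to make the image of an idempotent diagonal, feed parametrized factorizations of $C$ into Proposition~\ref{0001}, and use faithfulness to cancel $\mathcal{A}_{12}$ --- and your derivation of (d) directly from (a) and (c) is correct and even slightly slicker than the paper's, which gets it as the $U_{11}=P$ case of Eq.~(2.14). But the concrete factorizations you chose cannot carry the proof. Your workhorse family for (a) and (b), $(I+tA_{12})(C-tA_{12}C)=C$, has the perturbation multiplied by $C$ itself, so Proposition~\ref{0001}(a) only yields relations about $A_{12}\circ(A_{12}C_{22})$, $I\circ C$, and $A_{12}\circ C - I\circ(A_{12}C_{22})$. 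When $C=0$ (which satisfies $C_{21}=0$ and is covered by the theorem) all of these are vacuous, and already when $C_{22}=0$ the first degenerates; hence neither $\delta(P)A_{12}=A_{12}\delta(Q)$ nor $A_{12}\delta(A_{12})=\delta(A_{12})A_{12}=0$ can follow, and the auxiliary identity $P\delta(A_{12})P=0$ that you correctly flag as needed to split the symmetric relation in (b) is never produced. The paper avoids this by perturbing with data \emph{independent} of $C$: for $A_{11}$ invertible in $\mathcal{A}_{11}$, $W_{22}$ invertible in $\mathcal{A}_{22}$, $Z_{22}=C_{22}W_{22}^{-1}$, and two scalar parameters, it uses $[A_{11}+t(sA_{11}A_{12}+Z_{22})][(A_{11}^{-1}C-sA_{12}W_{22})+t^{-1}W_{22}]=C$, whose $t$- and $s$-expansions stay nondegenerate for every $C$ with $C_{21}=0$ and simultaneously generate the relations behind (a), (b), (c) and the mixed identities $\delta(A_{11}A_{12})Q=\delta(A_{11})A_{12}+A_{11}\delta(A_{12})-A_{11}A_{12}\delta(Q)$ and $P\delta(A_{12}W_{22})=\delta(A_{12})W_{22}+A_{12}\delta(W_{22})-\delta(P)A_{12}W_{22}$, which the apply-twice-and-cancel argument turns into (e) and (g). Your sketch of (e), (g) presupposes exactly these mixed relations but never exhibits a factorization that yields them: this two-parameter factorization is the heart of the proof and it is missing from your proposal.

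Two smaller problems. For (c), your single unparametrized factorization $C=(G+Q)(G^{-1}C_{11}+G^{-1}C_{12}+C_{22})$ gives one Jordan identity entangling $\delta(G)$, $\delta(Q)$ and $\delta$ of the $C$-dependent second factor; without a scalar parameter to separate terms via Proposition~\ref{0001} you cannot ``isolate the off-diagonal parts,'' and even granting that, diagonality of $\delta(G)$ is strictly weaker than the claim $\delta(\mathcal{A}_{11})\subseteq\mathcal{M}_{11}$ --- in the paper all three other Peirce corners of $\delta(A_{11})$ vanish, obtained from $\delta(A_{11})\circ W_{22}+A_{11}\circ\delta(W_{22})=0$ at $W_{22}=Q$ together with diagonality of $\delta(Q)$. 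Finally, your normalization $M=Q\Delta(P)P-P\Delta(P)Q$ makes $\delta(P)$ diagonal, whereas the paper's $M=P\Delta(Q)Q-Q\Delta(Q)P$ makes $\delta(Q)$ diagonal; since the hypothesis $C_{21}=0$ breaks the $P\leftrightarrow Q$ symmetry these are not automatically interchangeable, though this particular choice is repairable: once the correct factorization family is in hand, the Peirce bookkeeping goes through with either normalization. As written, however, the proposal lacks the central factorization, and with it the proofs of (a), (b), (c), (e) and (g).
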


\begin{proof}
Let $M=P\Delta(Q)Q-Q\Delta(Q)P$ and define
$\delta(A)=\Delta(A)-(MA-AM)$ for every $A\in \mathcal{A}$.  Then
$\delta$ is Jordan-derivable at any $G\in \mathcal{A}$ if and only
if $\Delta$ is Jordan-derivable at $G$; moreover $\delta(Q)\in
\mathcal{M}_{11}+\mathcal{M}_{22}$ by direct computation. Write $C=
C_{11}+C_{12}+C_{22}$. Fix any $A_{11}\in\mathcal{A}_{11}$ that is
invertible in $\mathcal{A}_{11}$ with $A_{11}^{-1}\in
\mathcal{A}_{11}$ and  $Z_{22}, W_{22}\in \mathcal{A}_{22}$ such
that $Z_{22}W_{22}=C_{22}$. Note that we can take any $W_{22}$ that
is invertible in $\mathcal{A}_{22}$ with $W_{22}^{-1}\in
\mathcal{A}_{22}$ and $Z_{22}=C_{22}W_{22}^{-1}$ to satisfy
$Z_{22}W_{22}=C_{22}$. For any $0\neq t\in \mathbb{F}$,  $s\in \mathbb{F}$,  and
$A_{12}\in \mathcal{A}_{12}$, a routine computation yields
$$[A_{11}+t(sA_{11}A_{12}+Z_{22})][(A_{11}^{-1}C-sA_{12}W_{22})+t^{-1}W_{22}]=C.$$
Since $\delta$ is Jordan derivable at C, $\delta $ differentiates
$$[A_{11}+t(sA_{11}A_{12}+Z_{22})]\circ [(A_{11}^{-1}C-sA_{12}W_{22})+t^{-1}W_{22}].$$
Thus $\delta $ differentiates
$[A_{11}+t(sA_{11}A_{12}+Z_{22})]\circ [t(A_{11}^{-1}C-sA_{12}W_{22})+W_{22}].$
By Proposition \ref{0001}$(a)$, we get $(i)$  $\delta $ differentiates $A_{11}\circ W_{22}$, \ \
$(ii)$   $\delta $ differentiates $(sA_{11}A_{12}+Z_{22})\circ (A_{11}^{-1}C-sA_{12}W_{22})$, and
$(iii)$   $\delta $ differentiates  $A_{11}\circ (A_{11}^{-1}C-sA_{12}W_{22})+(sA_{11}A_{12}+Z_{22})\circ W_{22}.$

By $(i)$, we get
\begin{eqnarray}\delta (A_{11})\circ W_{22}+A_{11}\circ \delta (W_{22})=\delta (A_{11}\circ W_{22})=0 \label{2000}
\end{eqnarray}

By $(ii)$ and Proposition \ref{0001}$(a)$, we have $\delta $  differentiates $(A_{11}A_{12})\circ (A_{12}W_{22})$, i.e.
\begin{eqnarray}
\delta (A_{11}A_{12})\circ (A_{12}W_{22}) +(A_{11}A_{12})\circ \delta (A_{12}W_{22})
=\delta [(A_{11}A_{12})\circ (A_{12}W_{22})]=0\label {2002}
\end{eqnarray}

By $(iii)$ and Proposition \ref{0001}$(b)$, we have
\begin{eqnarray*}
&0=\delta [A_{11}\circ (-A_{12}W_{22})+(A_{11}A_{12})\circ W_{22}]
=\delta (A_{11})\circ (-A_{12}W_{22}) +A_{11}\circ \delta (-A_{12}W_{22}) \\
&~~~~~~~~~~~~~~~~~~~~~~~~~~~~~~~~~~~~~~~~~~~~~~~~~~~~~~~+\delta (A_{11}A_{12})\circ W_{22}+(A_{11}A_{12})\circ \delta (W_{22})
\end{eqnarray*}

Thus
\begin{eqnarray}
\delta (A_{11}A_{12})\circ W_{22}+(A_{11}A_{12})\circ \delta (W_{22})
-\delta (A_{11})\circ (A_{12}W_{22}) - A_{11}\circ \delta (A_{12}W_{22})=0 \label {2003}
\end{eqnarray}
Since $\delta (Q)\in \mathcal M_{11}+\mathcal M_{22}$, $A_{11}\circ \delta (Q)\in \mathcal M_{11}$.
Setting $W_{22}=Q$ in Eq. $(2.1)$ gives
\begin{eqnarray*} \delta (A_{11})\circ Q+A_{11}\circ \delta (Q)=0
\end{eqnarray*}
Thus
$A_{11}\circ \delta (Q)=\delta (A_{11})\circ Q=0$. It follows $\delta (A_{11})Q=Q\delta (A_{11})=0.$
Hence $\delta (A_{11})\in \mathcal M_{11},$ and $\delta (A_{11})\circ W_{22}=0$. By Eq. $(2.1)$ again, we get
$A_{11}\circ \delta (W_{22})=0$; in particular, $P\circ \delta (W_{22})=0$.
It follows that $\delta (W_{22})\in \mathcal M_{22}$,
 which proves $(c)$.

Taking $A_{11}=P$ in Eq. $(2.3)$ yields
\begin{eqnarray}
\delta (A_{12})\circ W_{22}+A_{12}\circ \delta (W_{22})
-\delta (P)\circ (A_{12}W_{22}) - P\circ \delta (A_{12}W_{22})=0 \label {2004}
\end{eqnarray}
Multiplying $P$ from both sides of Eq. $(2.4)$ gives
$P \delta (A_{12}W_{22})P=0$. In particular,
\begin{eqnarray}
P \delta (A_{12})P=0 \label {2005}
\end{eqnarray}
Multiplying $P$ from the left of Eq. $(2.4)$ and applying Eq. $(2.5)$, we get
\begin{eqnarray}
P\delta (A_{12}) W_{22}+A_{12}\delta (W_{22})
-\delta (P)A_{12}W_{22} - P\delta (A_{12}W_{22})=0
\end{eqnarray}
Setting $W_{22}=Q$ in Eq. $(2.6)$ and combining with Eq. $(2.5)$ leads to
\begin{eqnarray}
A_{12}\delta (Q)=\delta (P)A_{12} \label {2006}
\end{eqnarray}
This proves $(a)$.

Taking $A_{11}=P$ and $W_{22}=Q$ in Eq. $(2.2)$, we get $A_{12}\circ \delta (A_{12})=0$, i.e.
\begin{eqnarray}
A_{12}\delta (A_{12})+\delta (A_{12})A_{12}=0 \label {2007}
\end{eqnarray}
Multiplying $P$ from the left of Eq. $(2.8)$ and applying Eq. $(2.5)$, yields $A_{12}\delta (A_{12})=0$; which gives
$\delta (A_{12})A_{12}=0$, when applied to Eq. $(2.8)$. This proves $(b)$.

Taking $W_{22}=Q$ in Eq. $(2.3)$ yields
\begin{eqnarray}
\delta (A_{11}A_{12})\circ Q+(A_{11}A_{12})\circ \delta (Q)
-\delta (A_{11})\circ A_{12} - A_{11}\circ \delta (A_{12})=0 \label {2008}
\end{eqnarray}
Multiplying $Q$ from both sides of Eq. $(2.9)$ gives
$Q\delta (A_{11}A_{12})Q=0$. In particular,
\begin{eqnarray}
Q \delta (A_{12})Q=0 \label {2009}
\end{eqnarray}
Multiplying $Q$ from the right of Eq. $(2.9)$ and applying Eq. $(2.10)$ gives
\begin{eqnarray*}
\delta (A_{11}A_{12})Q+A_{11}A_{12}\delta (Q)
-\delta (A_{11})A_{12} - A_{11}\delta (A_{12})Q=0
\end{eqnarray*}
Combining this with Eq. $(2.5)$ yields
\begin{eqnarray}
\delta (A_{11}A_{12})Q=
\delta (A_{11})A_{12} + A_{11}\delta (A_{12}) - A_{11}A_{12}\delta (Q) \label {2010}
\end{eqnarray}
Replacing $A_{11}$ with $A_{11}U_{11}$ in Eq. $(2.11)$ gives
\begin{eqnarray}
\delta (A_{11}U_{11}A_{12})Q=
\delta (A_{11}U_{11})A_{12} + A_{11}U_{11}\delta (A_{12}) - A_{11}U_{11}A_{12}\delta (Q) \label {2011}
\end{eqnarray}
On the other hand, applying Eq. $(2.11)$ twice gives
\begin{align}
 \delta (A_{11}U_{11}A_{12})Q &=
A_{11}\delta (U_{11}A_{12}) + \delta (A_{11})U_{11}A_{12}  - A_{11}U_{11}A_{12}\delta (Q) \nonumber \\
& = A_{11}\delta (U_{11}A_{12})Q + \delta (A_{11})U_{11}A_{12} - A_{11}U_{11}A_{12}\delta (Q) \nonumber \\
& = A_{11}[\delta (U_{11})A_{12} + U_{11}\delta (A_{12}) - U_{11}A_{12}\delta (Q)]  \nonumber \\
&~~~~ + \delta (A_{11})U_{11}A_{12} - A_{11}U_{11}A_{12}\delta (Q) \label {2012}
\end{align}
By Eqs. $(2.12)$, $(2.13)$, and $(2.7)$, we have
\begin{eqnarray*}
\delta (A_{11}U_{11})A_{12}=
[\delta (A_{11})U_{11} + A_{11}\delta (U_{11}) - \delta (P)A_{11}U_{11}]A_{12}
\end{eqnarray*}
If $\mathcal A_{12}$ is left faithful,
\begin{eqnarray}
\delta (A_{11}U_{11})=
\delta (A_{11})U_{11} + A_{11}\delta (U_{11}) - \delta (P)A_{11}U_{11} \label {2013}
\end{eqnarray}
Taking $U_{11}=P$ in Eq. $(2.14)$ gives $A_{11}\delta (P) = \delta (P)A_{11}$, that is,
$\delta (P)\in C(\mathcal A_{11}, \mathcal M)$. This proves $(d)$ and now $(e)$ follows directly
from Eq. $(2.14)$.

Since $\delta(P)A_{12}=A_{12}\delta(Q)$ for any $A_{12}\in \mathcal{A}_{12}$, we
have $A_{12}\delta(Q)A_{22}=\delta(P)A_{12}A_{22}=A_{12}A_{22}\delta(Q)$, then faithfulness
of $\mathcal{A}_{12}$ leads to $\delta(Q)A_{22}=A_{22}\delta(Q)$, that is,
 $\delta(Q)\in C(\mathcal{A}_{22},\mathcal{M})$. This proves $(f)$.

By Eqs. $(2.6)$ and $(2.10)$,
\begin{eqnarray}
P\delta (A_{12}W_{22})=\delta (A_{12}) W_{22}+A_{12}\delta (W_{22})
-\delta (P)A_{12}W_{22}
\end{eqnarray}
Replacing $W_{22}$ with $V_{22}W_{22}$ in Eq. $(2.15)$ gives
\begin{eqnarray}
P\delta (A_{12}V_{22}W_{22})=\delta (A_{12}) V_{22}W_{22}+A_{12}\delta (V_{22}W_{22})
-\delta (P)A_{12}V_{22}W_{22}
\end{eqnarray}
On the other hand, applying Eq. $(2.15)$ twice gives
\begin{align}
P\delta (A_{12}V_{22}W_{22})&=\delta (A_{12}V_{22}) W_{22}+A_{12}V_{22}\delta (W_{22})
-\delta (P)A_{12}V_{22}W_{22} \nonumber \\
&=P\delta (A_{12}V_{22}) W_{22}+A_{12}V_{22}\delta (W_{22})
-\delta (P)A_{12}V_{22}W_{22} \nonumber \\
&=[\delta (A_{12}) V_{22}+A_{12}\delta (V_{22})
-\delta (P)A_{12}V_{22}]W_{22} \nonumber \\
&~~~~+A_{12}V_{22}\delta (W_{22})
-\delta (P)A_{12}V_{22}W_{22}
\end{align}
By Eqs. $(2.16)$, $(2.17)$, and $(2.7)$,
\begin{eqnarray*}
A_{12}\delta (V_{22}W_{22})=A_{12}[\delta (V_{22})W_{22}+V_{22}\delta (W_{22})
-\delta (Q)V_{22}W_{22}]
\end{eqnarray*}
Since $\mathcal A_{12}$ is left faithful,
\begin{eqnarray}
\delta (V_{22}W_{22})=\delta (V_{22})W_{22}+V_{22}\delta (W_{22})
-\delta (Q)V_{22}W_{22}
\end{eqnarray}
This proves $(g)$.
\end{proof}

Suppose $\mathcal{B}$ is an algebra containing $\mathcal{A}$ and shares the same identity with $\mathcal{A}$, then
$\mathcal{B}$ is an $\mathcal{A}$-bimodule with respect to the
multiplication and addition of $\mathcal{B}$. Let $\mathcal{T}_{\mathcal{A}}= \{A\in \mathcal A: A_{21}=0\}$.
 The following proposition is contained in  \cite[Theorem 3.3]{PAN}, we include a proof here for completeness.

\begin{proposition}\label{2.4}
Suppose $\mathcal{A}_{12}$ is faithful to $\mathcal{B}$, $C(\mathcal{T}_{\mathcal{A}},\mathcal{B})=\mathbb{F}I$, and $B\in \mathcal B$.
If $T_{12}BT_{12}=0$ for every $T_{12}\in \mathcal{A}_{12}$, then $QBP=0$.
\end{proposition}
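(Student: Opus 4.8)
The plan is to reduce the quadratic hypothesis to a bilinear (skew-symmetric) identity, then use it together with faithfulness to manufacture an element that commutes with all of $\mathcal{T}_{\mathcal{A}}$, and finally exploit $C(\mathcal{T}_{\mathcal{A}},\mathcal{B})=\mathbb{F}I$ to kill it. Set $N=QBP$. Since every $T_{12}\in\mathcal{A}_{12}$ satisfies $T_{12}=PT_{12}Q$, an immediate block computation gives $T_{12}BT_{12}=T_{12}NT_{12}$, so the hypothesis becomes $T_{12}NT_{12}=0$ for all $T_{12}$. Replacing $T_{12}$ by $S_{12}+T_{12}$ and expanding, the two diagonal terms vanish and I obtain the skew-symmetric identity
\begin{equation*}
S_{12}NT_{12}+T_{12}NS_{12}=0 \qquad (S_{12},T_{12}\in\mathcal{A}_{12}).
\end{equation*}
The whole argument runs off this single identity; the relevant block facts are $S_{12}NT_{12}\in\mathcal{B}_{12}$, while $T_{12}N\in\mathcal{B}_{11}$ and $NT_{12}\in\mathcal{B}_{22}$.

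Next I would extract two commutation relations. Fixing $a\in\mathcal{A}_{11}$ and applying the skew identity both to the pair $(aS_{12},T_{12})$ and to $a$ times the pair $(S_{12},T_{12})$, the two resulting expressions for $aS_{12}NT_{12}$ combine to give $(T_{12}Na-aT_{12}N)S_{12}=0$ for every $S_{12}$; since $T_{12}Na-aT_{12}N\in\mathcal{B}_{11}$, left faithfulness of $\mathcal{A}_{12}$ forces $T_{12}Na=aT_{12}N$, i.e. $T_{12}N$ commutes with $\mathcal{A}_{11}$. The mirror-image manipulation, multiplying on the right by $b\in\mathcal{A}_{22}$ and invoking right faithfulness, shows $NT_{12}$ commutes with $\mathcal{A}_{22}$. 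The decisive step is then to consider, for a fixed $T_{12}$, the block-diagonal element
\begin{equation*}
Z=T_{12}N-NT_{12}\in\mathcal{B}_{11}+\mathcal{B}_{22},
\end{equation*}
and to claim $Z\in C(\mathcal{T}_{\mathcal{A}},\mathcal{B})$. As $\mathcal{T}_{\mathcal{A}}=\mathcal{A}_{11}+\mathcal{A}_{12}+\mathcal{A}_{22}$ and $Z$ is block-diagonal, it suffices to test the three blocks: commutation with $\mathcal{A}_{11}$ and with $\mathcal{A}_{22}$ reduces, after the vanishing cross terms are discarded, exactly to the two relations just established, while commutation with $\mathcal{A}_{12}$ reduces precisely to the skew identity $T_{12}NA_{12}=-A_{12}NT_{12}$. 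The sign in the definition of $Z$ is what makes the $\mathcal{A}_{12}$-block match (the choice $T_{12}N+NT_{12}$ would give the opposite sign), so identifying this exact construction is the main conceptual obstacle of the proof.

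Finally, $C(\mathcal{T}_{\mathcal{A}},\mathcal{B})=\mathbb{F}I$ yields $Z=\lambda I$ for some $\lambda\in\mathbb{F}$ (depending on $T_{12}$); comparing the two diagonal blocks gives $T_{12}N=\lambda P$ and $NT_{12}=-\lambda Q$. To force $\lambda=0$ I would compute $(NT_{12})^{2}$ in two ways: directly, $(NT_{12})^{2}=\lambda^{2}Q$, and by regrouping, $(NT_{12})(NT_{12})=N(T_{12}N)T_{12}=\lambda NPT_{12}=\lambda NT_{12}=-\lambda^{2}Q$, using $NP=N$. Hence $2\lambda^{2}Q=0$, and since the characteristic is not $2$ and $Q\neq0$ we get $\lambda=0$. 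Thus $NT_{12}=0$ for every $T_{12}\in\mathcal{A}_{12}$, that is $N\mathcal{A}_{12}=\{0\}$; left faithfulness then gives $NP=0$, and as $NP=QBP=N$ we conclude $QBP=0$, as required.
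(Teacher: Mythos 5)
Your proof is correct and takes essentially the same route as the paper's: polarize the quadratic hypothesis, extract the two commutation relations via faithfulness, and form exactly the paper's auxiliary element $\widetilde{B}=T_{12}BP-QBT_{12}$ (your $Z=T_{12}N-NT_{12}$) lying in $C(\mathcal{T}_{\mathcal{A}},\mathcal{B})=\mathbb{F}I$. The only divergence is in the endgame: the paper kills the scalar directly from $0=T_{12}BT_{12}=\lambda T_{12}$, whereas you square $NT_{12}$ and invoke characteristic $\neq 2$ --- slightly more roundabout, but valid since that is a standing assumption of Section 2.
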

\begin{proof}
Suppose $T_{12}BT_{12}=0$ for every $T_{12}\in \mathcal{A}_{12}$. For any non-zero $A_{12},T_{12}\in\mathcal{A}_{12}$, we have $T_{12}BT_{12}=0$, $A_{12}BA_{12}=0$ and $(A_{12}+T_{12})B(A_{12}+T_{12})=0$. It follows that
\begin{eqnarray}
A_{12}BT_{12}+T_{12}BA_{12}=0.\label{2023}
\end{eqnarray}
For any $A_{11}\in \mathcal{A}_{11}$, replacing $A_{12}$ in Eq. $(2.19)$ with $A_{11}A_{12}$ gives
\begin{eqnarray}
A_{11}A_{12}BT_{12}+T_{12}BA_{11}A_{12}=0.\label{2024}
\end{eqnarray}
Multiplying $A_{11}$ from the left of Eq. $(2.19)$ gives
\begin{eqnarray}
A_{11}A_{12}BT_{12}+A_{11}T_{12}BA_{12}=0.\label{2025}
\end{eqnarray}
By Eq. (\ref{2024}) and Eq. (\ref{2025}), we have
\begin{eqnarray*}
T_{12}BA_{11}A_{12}=A_{11}T_{12}BA_{12}.
\end{eqnarray*}
Since $A_{12}$ is arbitrary and $\mathcal{A}_{12}$ is faithful, we have
\begin{eqnarray}
T_{12}BA_{11}=A_{11}T_{12}BP.\label{2026}
\end{eqnarray}
Similarly, we have
\begin{eqnarray}
A_{22}QBT_{12}=QBT_{12}A_{22}.\label{2027}
\end{eqnarray}
Let $\widetilde{B}=T_{12}BP-QBT_{12}$. It follows from Eqs. (\ref{2023}),
(\ref{2026}) and  (\ref{2027}) that $\widetilde{B}$ commutes with $A_{12}$,
$A_{11}$ and $A_{22}$, that is, $\widetilde{B}\in C(\mathcal{T}_{\mathcal{A}},\mathcal{B})$.
Hence there exists a $k\in \mathbb{F}$ such that $\widetilde{B}=kI$. It follows
$T_{12}BP=kP$. Now $T_{12}BT_{12}=0$ leads to $kT_{12}=0$. Hence
$k=0$ and $T_{12}BP=0$. Since $T_{12}$ is arbitrary and
$\mathcal{A}_{12}$ is faithful, we have $QBP=0$.
\end{proof}

\begin{theorem}\label{2.5}
Suppose $\mathcal{A}_{12}$ is faithful to $\mathcal{B}$ and $C(\mathcal{T}_{\mathcal{A}},\mathcal{B})=\mathbb{F}I$.
If $\delta\in L(\mathcal{A},\mathcal{B})$ is Jordan derivable at $C\in \mathcal{T}_{\mathcal{A}}$ then
$\delta |_{\mathcal{T}_{\mathcal{A}}}$ is a derivation from $\mathcal{T}_{\mathcal{A}}$ to $\mathcal{B}$.
\end{theorem}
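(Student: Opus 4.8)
The plan is to reduce everything to the structure theorem already proved and then eliminate the two quantities that obstruct the derivation identity: the scalar value of $\delta$ on $P$ and $Q$, and the lower off-diagonal part of $\delta$ on $\mathcal{A}_{12}$. First I would apply Theorem \ref{0002} with $\Delta=\delta$ (legitimate since $C\in\mathcal{T}_{\mathcal{A}}$ means $C_{21}=0$) to obtain $\delta_0\in L(\mathcal{A},\mathcal{B})$ with $\delta-\delta_0$ an inner derivation and all of the conclusions (a)--(g) in force (faithfulness of $\mathcal{A}_{12}$ supplies (d)--(g) as well). Since $\mathcal{T}_{\mathcal{A}}$ is a subalgebra and an inner derivation restricts to a derivation on any subalgebra, it suffices to prove that $\delta_0|_{\mathcal{T}_{\mathcal{A}}}$ is a derivation. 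I record for later use that (c) gives $\delta_0(\mathcal{A}_{11})\subseteq\mathcal{B}_{11}$ and $\delta_0(\mathcal{A}_{22})\subseteq\mathcal{B}_{22}$, that Eqs. $(2.5)$ and $(2.10)$ give $\delta_0(\mathcal{A}_{12})\subseteq\mathcal{B}_{12}+\mathcal{B}_{21}$, and that Eqs. $(2.11)$ and $(2.15)$ together with (e) and (g) provide the partial product formulas on $\mathcal{A}_{11}\mathcal{A}_{12}$, $\mathcal{A}_{12}\mathcal{A}_{22}$ and on the two diagonal blocks.

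The first genuine step is to show $\delta_0(P)=\lambda P$ and $\delta_0(Q)=\lambda Q$ for a single scalar $\lambda\in\mathbb{F}$. Put $N=\delta_0(P)+\delta_0(Q)=\delta_0(I)$. Using (d), (f), property (a), and the Peirce block structure, I would check that $N$ commutes with each of $\mathcal{A}_{11}$, $\mathcal{A}_{12}$, $\mathcal{A}_{22}$; for $\mathcal{A}_{12}$ this is exactly $\delta_0(P)A_{12}=A_{12}\delta_0(Q)$ together with the vanishing of the cross terms $\delta_0(Q)A_{12}$ and $A_{12}\delta_0(P)$, while for the diagonal blocks one term commutes by (d)/(f) and the other annihilates the block from both sides. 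Hence $N\in C(\mathcal{T}_{\mathcal{A}},\mathcal{B})=\mathbb{F}I$, so $N=\lambda I$; comparing Peirce components with $\delta_0(P)\in\mathcal{B}_{11}$ and $\delta_0(Q)\in\mathcal{B}_{22}$ forces $\delta_0(P)=\lambda P$ and $\delta_0(Q)=\lambda Q$.

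The second, and I expect harder, step is to show that $R(A_{12}):=Q\delta_0(A_{12})P$ vanishes for all $A_{12}$. Polarizing (b) (replace $A_{12}$ by $A_{12}+T_{12}$) yields $A_{12}\delta_0(T_{12})+T_{12}\delta_0(A_{12})=0$ and its right-hand analogue; extracting Peirce components turns these into $A_{12}R(T_{12})+T_{12}R(A_{12})=0$ and $R(A_{12})T_{12}+R(T_{12})A_{12}=0$, while (b) itself gives $R(A_{12})A_{12}=A_{12}R(A_{12})=0$. Combining, $T_{12}R(A_{12})T_{12}=-A_{12}R(T_{12})T_{12}=0$ for every $T_{12}$, so $T_{12}\delta_0(A_{12})T_{12}=0$ for every $T_{12}$ (the $\mathcal{B}_{12}$-part contributes nothing, since $E_2E_1=0$). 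Proposition \ref{2.4}, applied with $B=\delta_0(A_{12})$, then gives $Q\delta_0(A_{12})P=0$, i.e. $R\equiv0$ and $\delta_0(\mathcal{A}_{12})\subseteq\mathcal{B}_{12}$. This is the step that genuinely consumes both the faithfulness of $\mathcal{A}_{12}$ and the hypothesis $C(\mathcal{T}_{\mathcal{A}},\mathcal{B})=\mathbb{F}I$, channelled through Proposition \ref{2.4}, and it is the main obstacle: neither this vanishing nor $\lambda=0$ is delivered by Theorem \ref{0002} alone.

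To finish I must show $\lambda=0$ and then verify the derivation identity block by block. For $\lambda=0$ I split on $C$: if $C\neq0$, Jordan derivability at $C$ applied to the factorization $I\cdot C=C$ gives $\delta_0(I)\circ C=0$, i.e. $2\lambda C=0$, whence $\lambda=0$ because $C\neq0$ and $\mathrm{char}\,\mathbb{F}\neq2$; if $C=0$, the factorization $A_{12}\cdot P=0$ gives, after using $\delta_0(A_{12})\circ P=\delta_0(A_{12})$, the relation $\lambda A_{12}=0$ for all $A_{12}$, so $\lambda=0$ since $\mathcal{A}_{12}\neq\{0\}$. With $\lambda=0$ and $R\equiv0$, (e) and (g) become honest derivations on $\mathcal{A}_{11}$ and $\mathcal{A}_{22}$, Eqs. $(2.11)$ and $(2.15)$ collapse to $\delta_0(A_{11}A_{12})=\delta_0(A_{11})A_{12}+A_{11}\delta_0(A_{12})$ and $\delta_0(A_{12}W_{22})=\delta_0(A_{12})W_{22}+A_{12}\delta_0(W_{22})$, and every product that vanishes inside $\mathcal{T}_{\mathcal{A}}$ (the pairs $\mathcal{A}_{11}\mathcal{A}_{22}$, $\mathcal{A}_{12}\mathcal{A}_{11}$, $\mathcal{A}_{12}\mathcal{A}_{12}$, $\mathcal{A}_{22}\mathcal{A}_{11}$, $\mathcal{A}_{22}\mathcal{A}_{12}$) has both sides of the derivation identity equal to $0$ by the block memberships $\delta_0(\mathcal{A}_{11})\subseteq\mathcal{B}_{11}$, $\delta_0(\mathcal{A}_{12})\subseteq\mathcal{B}_{12}$, $\delta_0(\mathcal{A}_{22})\subseteq\mathcal{B}_{22}$. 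Since $\mathcal{T}_{\mathcal{A}}=\mathcal{A}_{11}+\mathcal{A}_{12}+\mathcal{A}_{22}$, bilinearity gives $\delta_0(ST)=\delta_0(S)T+S\delta_0(T)$ for all $S,T\in\mathcal{T}_{\mathcal{A}}$, and adding back the inner derivation $\delta-\delta_0$ completes the proof.
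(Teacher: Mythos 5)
Your proposal is correct and follows essentially the same route as the paper's proof: subtract the inner derivation supplied by Theorem \ref{0002}, polarize property (b) and apply Proposition \ref{2.4} to get $\delta_0(\mathcal{A}_{12})\subseteq\mathcal{B}_{12}$, place $\delta_0(I)$ in $C(\mathcal{T}_{\mathcal{A}},\mathcal{B})=\mathbb{F}I$, kill the scalar $\lambda$ by splitting on $C\neq 0$ versus $C=0$, and verify the four nonvanishing block products via Eqs.\ $(2.14)$, $(2.11)$, $(2.15)$, $(2.18)$, the remaining cases being zero on both sides by block membership. Your only deviation is cosmetic: in the $C=0$ case you use the single factorization $A_{12}\cdot P=0$ to obtain $\lambda A_{12}=0$ directly, whereas the paper uses $A_{12}\cdot A_{11}=0$ together with faithfulness and a comparison with Eq.\ $(2.14)$ --- your shortcut is valid and slightly more economical.
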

\begin{proof}
Substracting an
inner derivation from $\delta$ if necessary, we can assume $\delta$
satisfies the properties of Theorem $2.3$.
Thus, for any $A_{12}$ and $T_{12}$ in $\mathcal{A}_{12}$, we have $\delta(A_{12})A_{12}=0$,
$\delta(T_{12})T_{12}=0$ and
$\delta(A_{12}+T_{12})(A_{12}+T_{12})=0$. It follows that
$\delta(A_{12})T_{12}+\delta(T_{12})A_{12}=0.$ Multiplying $T_{12}$
from the left we obtain $T_{12}\delta(A_{12})T_{12}=0$. Since
$T_{12}$ is arbitrary, $Q\delta(A_{12})P=0$, by Proposition $2.4$. This, together with Eqs.
$(2.5)$ and  $(2.10)$, yields $\delta(A_{12})\in
\mathcal{B}_{12}$.

For any $A_{11}\in \mathcal A_{11}$ and $A_{22}\in \mathcal A_{22}$, by Theorem $2.3$
$\delta (A_{11})\in \mathcal B_{11}$ and $\delta (A_{22})\in \mathcal B_{22}$.

Since $\delta(P)\in\mathcal{B}_{11}$ and
$\delta(Q)\in\mathcal{B}_{22}$, by Theorem $2.3$
$\delta(I)=\delta(P)+\delta(Q)$ commutes with $\mathcal{A}_{11}$,
$\mathcal{A}_{12}$ and $\mathcal{A}_{22}$, whence $\delta(I)\in
C(\mathcal{T}_{\mathcal{A}},\mathcal{B})$. Thus $\delta(I)=\lambda I$. By the fact
that $\delta$ is Jordan derivable at $C$, we have
$\delta(IC+CI)=\delta(I)C+I\delta(C)+C\delta(I)+\delta(C)I$, which
implies $\lambda C=0$. If $C\neq0$, $\lambda=0$. Hence
$\delta(P)=\delta(Q)=0$. If $C=0$, then the fact that
$A_{12}A_{11}=0$ holds for every $A_{11}\in \mathcal{A}_{11}$ and
$A_{12}\in \mathcal{A}_{12}$ implies that
$\delta(A_{11}A_{12})=\delta(A_{12})A_{11}+A_{12}\delta(A_{11})+A_{11}\delta(A_{12})+\delta(A_{11})A_{12}=A_{11}\delta(A_{12})+\delta(A_{11})A_{12}$,
which together with faithfulness of $\mathcal{A}_{12}$ leads to
$\delta(A_{11}U_{11})=\delta(A_{11})U_{11}+A_{11}\delta(U_{11})$ for
every $A_{11}, U_{11}\in \mathcal{A}_{11}$. Comparing with
Eq. $(2.14)$, we have that $\delta(P)=\delta(Q)=0$.

To see $\delta |_{\mathcal{T}_{\mathcal{A}}}$ is a derivation, it suffices to show that for any $A_{ij}, A_{kl}\in \mathcal{T}_{\mathcal{A}}$
$$\delta(A_{ij}A_{kl})=\delta(A_{ij})A_{kl}+A_{ij}\delta(A_{kl}).$$
We will label each case as Case $(ij,kl)$. Since $\delta(A_{11})\in\mathcal{B}_{11}$, $\delta(A_{12})\in\mathcal{B}_{12}$,
and $\delta(A_{22})\in\mathcal{B}_{22}$, we only need to check cases for $j=k$. There are only 4 cases.

Case $(11,11)$ follows from Eq. $(2.14)$.

Case $(11,12)$ follows from Eq. $(2.11)$.

Case $(12,22)$ follows from Eq. $(2.15)$.

Case $(22,22)$ follows from Eq. $(2.18)$.
\end{proof}

\begin{corollary}\label{0002}
Suppose $\mathcal{A}_{12}$ is faithful to $\mathcal{B}$, $\mathcal{A}_{21}=\{0\}$, and
$C(\mathcal{A},\mathcal{B})=\mathbb{F}I$. If $\delta\in L(\mathcal{A},\mathcal{B})$ is Jordan derivable
at $C\in \mathcal A$ then $\delta $ is a derivation. In particular, every $C\in \mathcal A$ is a Jordan all-derivable point
of $L(\mathcal A, \mathcal B)$ and every Jordan derivation is a derivation.
\end{corollary}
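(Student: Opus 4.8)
The plan is to deduce this corollary directly from Theorem~\ref{2.5}, the only real content being the observation that the hypothesis $\mathcal{A}_{21}=\{0\}$ forces $\mathcal{T}_{\mathcal{A}}$ to be all of $\mathcal{A}$. Recall that $\mathcal{T}_{\mathcal{A}}=\{A\in\mathcal{A}:A_{21}=0\}$, where $A_{21}=QAP\in\mathcal{A}_{21}$. If $\mathcal{A}_{21}=\{0\}$, then $A_{21}=0$ automatically for every $A\in\mathcal{A}$, so $\mathcal{T}_{\mathcal{A}}=\mathcal{A}$. Consequently the two centralizers are literally the same set, $C(\mathcal{T}_{\mathcal{A}},\mathcal{B})=C(\mathcal{A},\mathcal{B})=\mathbb{F}I$, so no hypothesis beyond $C(\mathcal{A},\mathcal{B})=\mathbb{F}I$ is needed to feed into Theorem~\ref{2.5}.

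First I would invoke Theorem~\ref{2.5}: since $\mathcal{A}_{12}$ is faithful to $\mathcal{B}$, since $C(\mathcal{T}_{\mathcal{A}},\mathcal{B})=\mathbb{F}I$, and since any given $C\in\mathcal{A}=\mathcal{T}_{\mathcal{A}}$ lies in $\mathcal{T}_{\mathcal{A}}$, any $\delta\in L(\mathcal{A},\mathcal{B})$ that is Jordan derivable at $C$ restricts to a derivation on $\mathcal{T}_{\mathcal{A}}$. But $\mathcal{T}_{\mathcal{A}}=\mathcal{A}$, so $\delta|_{\mathcal{T}_{\mathcal{A}}}=\delta$, which yields that $\delta$ itself is a derivation from $\mathcal{A}$ to $\mathcal{B}$. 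This establishes the main assertion.

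For the two ``in particular'' clauses I would argue purely formally. Every derivation is a Jordan derivation, so the conclusion just obtained says that every mapping Jordan derivable at a fixed $C\in\mathcal{A}$ is in fact a Jordan derivation; this is precisely the statement that $C$ is a Jordan all-derivable point of $L(\mathcal{A},\mathcal{B})$, and since $C$ was arbitrary, every $C\in\mathcal{A}$ is such a point. Conversely, a Jordan derivation is by definition Jordan derivable at every point, in particular at some $C\in\mathcal{A}$; applying the main assertion once more shows it is a derivation.

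I do not anticipate any genuine obstacle: the corollary is a specialization of Theorem~\ref{2.5}, and the entire substance is the identification $\mathcal{T}_{\mathcal{A}}=\mathcal{A}$ under $\mathcal{A}_{21}=\{0\}$, together with the trivial logical passage between ``Jordan derivable at $C$ implies a derivation'' and the all-derivable-point terminology. The one step worth writing out explicitly is the equality $\mathcal{T}_{\mathcal{A}}=\mathcal{A}$, since everything else is then immediate.
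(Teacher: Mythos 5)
Your proposal is correct and coincides with the paper's intended argument: the corollary is stated without proof precisely because it is the immediate specialization of Theorem~\ref{2.5}, and your observation that $\mathcal{A}_{21}=\{0\}$ forces $\mathcal{T}_{\mathcal{A}}=\mathcal{A}$ (hence $C(\mathcal{T}_{\mathcal{A}},\mathcal{B})=C(\mathcal{A},\mathcal{B})=\mathbb{F}I$ and $\delta|_{\mathcal{T}_{\mathcal{A}}}=\delta$) is exactly the needed step. Your formal handling of the two ``in particular'' clauses---that a derivation is a Jordan derivation, and that a Jordan derivation is Jordan derivable at every point---is likewise correct.
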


As a consequence of Corollary \ref{0002}, similar to \cite[Theorem 4.4]{PAN} we have

\begin{theorem}\label{0003}
Let $\mathcal{L}$ be a subspace lattice on a Banach space $X$ and
$\mathcal{A}=alg\mathcal{L}$. Suppose there exists a non-trivial
idempotent $P\in \mathcal{A}$ such that $ran(P)\in \mathcal{L}$ and
$PB(X)(I-P)\subseteq \mathcal{A}$. If $\delta\in L(\mathcal{A},B(X))$ is Jordan derivable
at $C\in \mathcal A$ then $\delta $ is a derivation. In particular, every $C\in \mathcal A$ is a Jordan all-derivable point
of $L(\mathcal A, B(X))$.
\end{theorem}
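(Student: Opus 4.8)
The plan is to deduce Theorem \ref{0003} from Corollary \ref{0002} by verifying that the three hypotheses of the corollary hold in the present setting, namely that $\mathcal{A}_{12}$ is faithful to $\mathcal{B}=B(X)$, that $\mathcal{A}_{21}=\{0\}$, and that $C(\mathcal{A},B(X))=\mathbb{F}I$. Throughout, I take $\mathcal{B}=B(X)$ as the $\mathcal{A}$-bimodule and use the given non-trivial idempotent $P$ to form the Peirce decomposition $\mathcal{A}_{ij}=E_i\mathcal{A}E_j$ with $E_1=P$, $E_2=Q=I-P$. The phrase ``similar to \cite[Theorem 4.4]{PAN}'' signals that these verifications are the routine content; the real leverage comes from the hypotheses $\operatorname{ran}(P)\in\mathcal{L}$ and $PB(X)Q\subseteq\mathcal{A}$.

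First I would handle $\mathcal{A}_{21}=\{0\}$. Since $M:=\operatorname{ran}(P)\in\mathcal{L}$ and every operator in $\mathcal{A}=\mathrm{alg}\mathcal{L}$ leaves $M$ invariant, any $A\in\mathcal{A}$ satisfies $A(M)\subseteq M$; as $P$ is the (idempotent) projection with range $M$ this yields $PAP=AP$, equivalently $QAP=0$ for all $A\in\mathcal{A}$, so $\mathcal{A}_{21}=Q\mathcal{A}P=\{0\}$. Next, for faithfulness of $\mathcal{A}_{12}=P\mathcal{A}Q$ to $B(X)$, I would exploit the hypothesis $PB(X)Q\subseteq\mathcal{A}$, which forces $\mathcal{A}_{12}=PB(X)Q$ to be as large as possible. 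Then for $B\in B(X)$, if $B\mathcal{A}_{12}=\{0\}$, i.e. $BPB(X)Q=\{0\}$, choosing operators in $PB(X)Q$ of rank one shows $BP=0$ (left faithfulness); the mirror argument using $\mathcal{A}_{12}B=\{0\}$ gives $QB=0$ (right faithfulness). The rank-one calculation here is the standard separating argument and should be stated carefully but not belabored.

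The remaining and most substantive verification is $C(\mathcal{A},B(X))=\mathbb{F}I$. Suppose $T\in B(X)$ commutes with every element of $\mathcal{A}$. Using $\mathcal{A}_{21}=\{0\}$ and $PB(X)Q\subseteq\mathcal{A}$ I would first show the off-diagonal components of $T$ vanish by pairing $T$ against rank-one operators in $\mathcal{A}_{12}$ and against the diagonal idempotents $P,Q$ (commuting with $P$ already forces $T$ to be block-diagonal, $T=PTP+QTQ$). Commuting with an arbitrary $A_{12}\in PB(X)Q$ then gives $(PTP)A_{12}=A_{12}(QTQ)$ for all such $A_{12}$; since $A_{12}$ ranges over all of $PB(X)Q$, a further rank-one/separating argument pins $PTP$ and $QTQ$ to be the same scalar multiple of the identity on their respective summands, yielding $T=\lambda I$. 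Once these three hypotheses are in place, Corollary \ref{0002} applies verbatim to conclude that any $\delta\in L(\mathcal{A},B(X))$ Jordan derivable at an arbitrary $C\in\mathcal{A}$ is in fact a derivation, and hence every $C\in\mathcal{A}$ is a Jordan all-derivable point of $L(\mathcal{A},B(X))$.

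The step I expect to be the main obstacle is the centralizer computation $C(\mathcal{A},B(X))=\mathbb{F}I$, since it is the only place where one must genuinely use the richness of $PB(X)Q$ inside $\mathcal{A}$ together with a density or rank-one separation argument in $B(X)$; the vanishing of $\mathcal{A}_{21}$ and the faithfulness of $\mathcal{A}_{12}$ are comparatively immediate. One subtlety worth flagging is that the corollary was stated for $\mathcal{B}$ an algebra sharing the identity with $\mathcal{A}$ and for the hypotheses phrased via $\mathcal{T}_{\mathcal{A}}$; because $\mathcal{A}_{21}=\{0\}$ we have $\mathcal{T}_{\mathcal{A}}=\mathcal{A}$, so the passage from Theorem \ref{2.5} through Corollary \ref{0002} is clean and no additional bookkeeping about the $(2,1)$-block is required.
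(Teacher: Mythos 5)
Your proposal is correct and takes essentially the same route as the paper: it too reduces Theorem \ref{0003} to Corollary \ref{0002} with $\mathcal{B}=B(X)$, getting $\mathcal{A}_{21}=\{0\}$ from $ran(P)\in\mathcal{L}$, faithfulness of $\mathcal{A}_{12}=PB(X)Q$ from the inclusion hypothesis, and $C(\mathcal{A},B(X))=\mathbb{C}I$ by first noting block-diagonality ($PBQ=QBP=0$) and then pairing against rank-one operators $x\otimes f Q$ with $x\in ran(P)$ to force $B_{11}=kP$ and $B_{22}=kQ$. Your block-diagonal-plus-rank-one separation argument for the centralizer is exactly the paper's computation, so there is no substantive divergence to report.
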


\begin{proof}
We will apply Corollary \ref{0002} with $\mathcal{B}=B(X)$. Let
$Q=I-P$. The condition $ran(P)\in \mathcal{L}$ implies
$\mathcal{A}_{21}=Q\mathcal{A}P=\{0\}$. The condition
$PB(X)Q\subseteq \mathcal{A}$ implies $\mathcal{A}_{12}=PB(X)Q$ is
faithful. To see $C(\mathcal{A},B(X))=\mathbb{C}I$, take any $B\in
C(\mathcal{A},B(X))$, from $BP=PB$ we get $PBQ=0$. From $BQ=QB$, we
have $QBP=0$. Thus $B=B_{11}+B_{22}$. For any $x\in ran(P)$ and
$f\in X^*$, $x\otimes f Q\in \mathcal{A}_{12}$. It follows from
$Bx\otimes f Q=x\otimes f Q B$ that $B_{11}x\otimes f Q=x\otimes f Q
B_{22}$, which leads to $B_{11}x\in \mathbb{C}x$. Since $x\in
ran(P)$ is arbitrary, it follows $B_{11}=kP$ for some $k\in
\mathbb{C}$. Hence $x\otimes f(kQ-B_{22})=0$, and we have
$B_{22}=kQ$ and $B=kI$. Now the conclusion follows from Corollary
\ref{0002}.
\end{proof}

As an immediate but noteworthy application of Theorem \ref{0003}, we
have the following corollary which generalizes the main result in
\cite{ZHUJUN}.

\begin{corollary}
Let $\mathcal{N}$ be a nest on a Banach space $X$ and
$\mathcal{A}=alg\mathcal{N}$. Suppose there exists a non-trivial
idempotent $P\in \mathcal{A} $ such that $ran(P) \in \mathcal{N}$.
If $\delta\in L(\mathcal{A},B(X))$ is Jordan derivable
at $C\in \mathcal A$ then $\delta $ is a derivation.
In particular, every $C\in \mathcal A$ is a Jordan all-derivable point
of $L(\mathcal A, B(X))$.
\end{corollary}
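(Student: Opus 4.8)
The plan is to derive this corollary directly from Theorem \ref{0003} by verifying its single non-trivial hypothesis, namely that the idempotent $P$ satisfies $PB(X)(I-P)\subseteq \mathcal{A}=alg\mathcal{N}$. The conditions $ran(P)\in\mathcal{N}$ and the non-triviality of $P$ are already assumed in the corollary, so the only gap between the hypotheses here and those of Theorem \ref{0003} is the containment of the corner $PB(X)Q$ (with $Q=I-P$) in the nest algebra. Once that containment is established, every remaining conclusion, including that each $C\in\mathcal{A}$ is a Jordan all-derivable point, is inherited verbatim from Theorem \ref{0003}, so no further Peirce-decomposition or faithfulness bookkeeping is needed.

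\medskip

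First I would recall what it means for an operator to lie in $alg\mathcal{N}$: an operator $T$ belongs to $alg\mathcal{N}$ precisely when $T$ leaves every member $N\in\mathcal{N}$ invariant, i.e.\ $TN\subseteq N$ for all $N$. Setting $N_0=ran(P)\in\mathcal{N}$, I would fix an arbitrary $R=PBQ$ with $B\in B(X)$ and check that $R$ leaves each $N\in\mathcal{N}$ invariant. The key geometric input is that $\mathcal{N}$ is totally ordered, so for any $N\in\mathcal{N}$ either $N\subseteq N_0$ or $N_0\subseteq N$. I would split into these two cases and use the two factors of $R$ separately: the trailing factor $Q=I-P$ annihilates $N_0=ran(P)$ when $P$ is the projection onto $N_0$, handling the vectors inside $N_0$, while the leading factor $P$ forces the image of $R$ into $N_0\subseteq N$, handling the case $N_0\subseteq N$. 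This comparability argument is exactly the reason a nest behaves better than a general subspace lattice here.

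\medskip

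The step I expect to be the main obstacle is the case $N\subsetneq N_0$, because there $Q$ need not annihilate $N$ and one must instead argue that $R(N)\subseteq N_0$ is not enough on its own; one wants $R(N)\subseteq N$. The clean way around this is to observe that for $N\subseteq N_0$ one has $PN=N$, so the action of $R$ on such $N$ is governed by $QN$, and since $Q$ is a complementary idempotent to the projection $P$ onto $N_0\supseteq N$ the relevant restriction vanishes on $N$; thus $R(N)=PBQ(N)=\{0\}\subseteq N$. I would make this precise by noting $ran(P)\in\mathcal{N}$ implies $P$ is the idempotent whose range is $N_0$ and which acts as the identity on $N_0$, so $QN=(I-P)N=\{0\}$ whenever $N\subseteq N_0$. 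Combining the two cases gives $RN\subseteq N$ for every $N\in\mathcal{N}$, hence $R\in alg\mathcal{N}$ and therefore $PB(X)Q\subseteq\mathcal{A}$.

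\medskip

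With this containment in hand, the hypotheses of Theorem \ref{0003} hold for $\mathcal{A}=alg\mathcal{N}$ and the given $P$, so the conclusion that every Jordan derivable mapping at any $C\in\mathcal{A}$ is a derivation, and hence that every $C\in\mathcal{A}$ is a Jordan all-derivable point of $L(\mathcal{A},B(X))$, follows immediately. It is worth remarking that the hypothesis amounts to requiring a non-trivial element of $\mathcal{N}$ that is complemented in $X$ (the range of $P$), which is precisely the phrasing used in the abstract; the idempotent $P$ furnishes that complementation, and this is the only place the assumption is used.
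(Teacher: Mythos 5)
Your proposal is correct and follows exactly the paper's route: the paper's own proof is the one-liner ``Let $Q=I-P$; then $PB(X)Q\subseteq\mathcal{A}$; now apply Theorem \ref{0003},'' leaving the containment unjustified. Your comparability argument (for $N\in\mathcal{N}$, either $N\subseteq ran(P)$, where $Q$ annihilates $N$ since the idempotent $P$ acts as the identity on its range, or $ran(P)\subseteq N$, where the image of $PBQ$ lands in $ran(P)\subseteq N$) is precisely the detail the paper omits, and it is valid.
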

\begin{proof}
Let $Q=I-P$.  Then $PB(X)Q\subseteq \mathcal{A}$. Now applying Theorem \ref{0003} completes the proof.
\end{proof}
For an algebra $\mathcal{A}$ and a left $\mathcal{A}$-module $\mathcal{M}$,
 we call a subset $\mathcal{B}$ of $\mathcal{A}$ \emph{separates} $\mathcal{M}$
 if for every $M\in \mathcal{M}$, $\mathcal{B}M={0}$ implies $M=0$.
Let $[\mathcal A_{11}, \mathcal A_{11}]=\{A_{11}B_{11}-B_{11}A_{11}: \ A_{11}, B_{11}\in \mathcal A_{11}\}$.

\begin{theorem} \label{0008}
Suppose $\mathcal{A}_{12}$ is faithful to $\mathcal{B}$, $C(\mathcal{T}_{\mathcal{A}},\mathcal{B})=\mathbb{F}I$, and
$[\mathcal A_{11}, \mathcal A_{11}]$ separates $\mathcal B_{12}$.  If $\delta \in L(\mathcal A, \mathcal B)$ is Jordan derivable at some
$C\in \mathcal A_{11}+\mathcal A_{12}$ then $\delta $ is a derivation. In particular, every $C\in \mathcal A_{11}+\mathcal A_{12}$
is a Jordan all-derivable point of $L(\mathcal A, \mathcal B).$
\end{theorem}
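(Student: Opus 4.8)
The plan is to build on Theorem $2.3$ and Theorem \ref{2.5}. Since $C\in\mathcal{A}_{11}+\mathcal{A}_{12}\subseteq\mathcal{T}_{\mathcal{A}}$, Theorem \ref{2.5} applies, so after subtracting a suitable inner derivation I may assume $\delta$ satisfies all the conclusions of Theorem $2.3$ and Theorem \ref{2.5}; in particular $\delta(P)=\delta(Q)=0$, $\delta(\mathcal{A}_{ii})\subseteq\mathcal{B}_{ii}$, $\delta(\mathcal{A}_{12})\subseteq\mathcal{B}_{12}$, and $\delta|_{\mathcal{T}_{\mathcal{A}}}$ is already a derivation. Since $\delta$ preserves Peirce components, it then suffices to treat the four products that use a factor from $\mathcal{A}_{21}$, namely the cases $(21,11)$, $(12,21)$, $(21,12)$ and $(22,21)$, after first locating the image $\delta(\mathcal{A}_{21})$.

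To locate $\delta(\mathcal{A}_{21})$ I would use a factorization in which the $\mathcal{A}_{21}$-entry sits in the right factor. For invertible $X_{11}\in\mathcal{A}_{11}$, arbitrary $X_{12}\in\mathcal{A}_{12}$, $A_{21}\in\mathcal{A}_{21}$ and $Y_{22}\in\mathcal{A}_{22}$, setting $Y(t)=[X_{11}^{-1}C_{11}-tX_{11}^{-1}X_{12}A_{21}]+X_{11}^{-1}(C_{12}-X_{12}Y_{22})+tA_{21}+Y_{22}$ gives $(X_{11}+X_{12})Y(t)=C$ for every $t$ (this is exactly where $C_{22}=0$ is used: the lower row is free to absorb the parameter). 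Taking $X_{12}=0$ and applying Proposition \ref{0001}$(a)$ in $t$ yields that $\delta$ differentiates $X_{11}\circ A_{21}$, which reads $\delta(A_{21}X_{11})=A_{21}\delta(X_{11})+X_{11}\delta(A_{21})+\delta(A_{21})X_{11}$; writing each $U_{11}\in\mathcal{A}_{11}$ as a linear combination of invertibles extends this to all $U_{11}$. Putting $U_{11}=P$ and comparing Peirce components forces the $\mathcal{B}_{11}$- and $\mathcal{B}_{22}$-parts of $\delta(A_{21})$ to vanish, so $\delta(A_{21})\in\mathcal{B}_{12}+\mathcal{B}_{21}$.

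The main obstacle is the remaining $\mathcal{B}_{12}$-component $\phi(A_{21}):=P\delta(A_{21})Q$, because faithfulness is assumed only for $\mathcal{A}_{12}$ and this component is invisible to the $\mathcal{T}_{\mathcal{A}}$-analysis. Taking the $P(\cdot)Q$-part of the extended relation gives $\phi(A_{21}U_{11})=U_{11}\phi(A_{21})$; computing $\phi(A_{21}U_{11}V_{11})$ in its two groupings then yields $[U_{11},V_{11}]\phi(A_{21})=0$ for all $U_{11},V_{11}\in\mathcal{A}_{11}$, that is $[\mathcal{A}_{11},\mathcal{A}_{11}]\phi(A_{21})=\{0\}$. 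This is precisely the point at which the hypothesis that $[\mathcal{A}_{11},\mathcal{A}_{11}]$ separates $\mathcal{B}_{12}$ is invoked: it gives $\phi(A_{21})=0$, so $\delta(A_{21})\in\mathcal{B}_{21}$ for every $A_{21}$.

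Once $\delta(\mathcal{A}_{21})\subseteq\mathcal{B}_{21}$, case $(21,11)$ is immediate, since the term $U_{11}\delta(A_{21})$ in the extended relation now vanishes. For cases $(12,21)$ and $(21,12)$ I would run the same factorization with $X_{11}=P$ and $X_{12}=sA_{12}$: applying Proposition \ref{0001}$(a)$ first in $t$ shows that $\delta$ differentiates $(P+sA_{12})\circ(A_{21}-sA_{12}A_{21})$ for all $s$, and a second application of Proposition \ref{0001}$(a)$ in $s$, together with $\delta(P)=0$ and the fact that $\delta(A_{12}A_{21})$ enters both sides identically and cancels, collapses to the statement that $\delta$ differentiates $A_{12}\circ A_{21}$; reading off its $\mathcal{B}_{11}$- and $\mathcal{B}_{22}$-parts gives $(12,21)$ and $(21,12)$ at once. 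Finally, for case $(22,21)$ I would compute $\delta(A_{22}A_{21}B_{12})$ in the two groupings $(A_{22}A_{21})B_{12}$ and $A_{22}(A_{21}B_{12})$, using the just-proved $(21,12)$ and the $(22,22)$ rule from Theorem \ref{2.5}; after cancellation this gives $[\delta(A_{22}A_{21})-\delta(A_{22})A_{21}-A_{22}\delta(A_{21})]B_{12}=0$ for all $B_{12}\in\mathcal{A}_{12}$, whence left faithfulness of $\mathcal{A}_{12}$ forces the bracket to vanish. This establishes all product rules, so $\delta$ is a derivation.
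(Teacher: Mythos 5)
Your proposal is correct and follows essentially the same route as the paper: reduce via Theorem $2.3$ and Theorem \ref{2.5}, use factorizations of $C$ with a $t$-parameter in the $\mathcal{A}_{21}$-slot (your $Y(t)$ with $X_{12}=sA_{12}$, $Y_{22}=Q$ is exactly the paper's factorization $(P+sA_{12})[t(A_{21}-sA_{12}A_{21})-sA_{12}+C+Q]=C$), kill $P\delta(A_{21})Q$ by the same commutator argument $[U_{11},V_{11}]\,P\delta(A_{21})Q=0$ together with the separation hypothesis, and finish the cases $(21,11)$, $(12,21)$, $(21,12)$, $(22,21)$ with the identical cancellation and faithfulness steps. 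The only differences are cosmetic (your single parametrized factorization subsumes the paper's two, and you locate the diagonal parts of $\delta(A_{21})$ by setting $U_{11}=P$ rather than multiplying Eqs. $(2.24)$--$(2.25)$ by $P$ and $Q$), so the argument is sound as written.
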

\begin{proof} Let $C\in \mathcal A_{11}+\mathcal A_{12}$ and
$\delta\in L(\mathcal{A},\mathcal{B})$ be Jordan derivable at $C$.
Substracting an inner derivation from $\delta$ if necessary, we can assume $\delta$
satisfies the properties of Theorem $2.3$.
Let $Q=I-P$ then $QC=0$.
By Theorem $2.3$ and Proposition \ref{2.4}, $\delta (\mathcal A_{11})\subseteq \mathcal B_{11}$,
$\delta (\mathcal A_{22})\subseteq \mathcal B_{22}$, and $\delta (\mathcal A_{12})\subseteq \mathcal B_{12}$;
moreover, $\delta (I)=\delta (P)=\delta (Q)=0.$
For any $t\in \mathbb F$, $A_{11}\in \mathcal A_{11}$ that is invertible in $\mathcal A_{11}$ with
$A_{11}^{-1}\in \mathcal A_{11}$, and $A_{21}\in \mathcal A_{21}$, clearly $A_{11}(A_{11}^{-1}C+tA_{21})=C$.
Since $\delta $ is Jordan derivable at $C$, $\delta $ differentiates $A_{11}\circ (A_{11}^{-1}C+tA_{21})$.
By Proposition $2.2$,
\begin{eqnarray}
\delta (A_{11}\circ A_{21}) = \delta (A_{11})\circ A_{21}+A_{11}\circ \delta (A_{21})
\end{eqnarray}
Multiplying $Q$ from the right of Eq. $(2.24)$ gives
\begin{eqnarray}
A_{11}\delta (A_{21})Q = \delta (A_{21}A_{11})Q
\end{eqnarray}
For any $U_{11}\in \mathcal A_{11}$, by Eq. $(2.25)$ we get
\begin{eqnarray*}
A_{11}U_{11}\delta (A_{21})Q = \delta (A_{21}A_{11}U_{11})Q
\end{eqnarray*}
On the other hand, applying Eq. $(2.25)$ twice gives
\begin{eqnarray*}
U_{11}A_{11}\delta (A_{21})Q = U_{11}\delta (A_{21}A_{11})Q = \delta (A_{21}A_{11}U_{11})Q
\end{eqnarray*}
It follows $[A_{11}U_{11}-U_{11}A_{11}]\delta (A_{21})Q=0$.
Since $[\mathcal A_{11}, \mathcal A_{11}]$ separates $\mathcal B_{12}$, $P\delta (A_{21})Q=0$.
Multiplying $Q$ from the left of Eq. $(2.25)$ gives $Q\delta (A_{21}A_{11})Q = 0$. In particular,
$Q\delta (A_{21})Q = 0$.
Multiplying $P$ from both sides of Eq. $(2.24)$ and setting $A_{11}=P$ leads to
$P\delta (A_{21})P = 0$. Thus $\delta (A_{21})\in \mathcal B_{21}$.

To see $\delta $ is a derivation, it suffices to show that for any $A_{ij}\in \mathcal A_{ij}, A_{kl}\in \mathcal A_{kl}$
$$\delta(A_{ij}A_{kl})=\delta(A_{ij})A_{kl}+A_{ij}\delta(A_{kl})$$
We will again label each case as Case $(ij,kl)$. Since $\delta(A_{ij})\in\mathcal{B}_{ij}$, for all $i, j=1, 2,$
we only need to check cases for $j=k$. There are 8 cases.

Case $(11,11)$ follows from Eq. $(2.14)$.

Case $(11,12)$ follows from Eq. $(2.11)$.

Case $(12,22)$ follows from Eq. $(2.15)$.

Case $(22,22)$ follows from Eq. $(2.18)$.

Case $(21,11)$ follows from Eq. $(2.24)$. \\
It remains to show Cases $(12, 21)$, $(21, 12)$, and $(22, 21)$.

For any $s, t \in \mathbb F$, a routine computation shows $(P+sA_{12})[t(A_{21}-sA_{12}A_{21})-sA_{12}+C+Q]=C$.
Since $\delta $ is Jordan derivable at $C$, $\delta $ differentiates
 $(P+sA_{12})\circ [t(A_{21}-sA_{12}A_{21})-sA_{12}+C+Q].$ By  Proposition $2.2$,
$\delta $ differentiates  $(P+sA_{12})\circ (A_{21}-sA_{12}A_{21})$. Applying   Proposition $2.2$ again , we see
 $\delta $ differentiates $P\circ (-A_{12}A_{21})+A_{12}\circ A_{21}$. Case $(11,11)$ implies $\delta $  differentiates $P\circ (-A_{12}A_{21})$,
It follows that $\delta $ differentiates $A_{12}\circ A_{21},$ i.e.
\begin{eqnarray}
\delta (A_{12}\circ A_{21}) = \delta (A_{12})\circ A_{21}+A_{12}\circ \delta (A_{21})
\end{eqnarray}
Multiplying $P$ from both sides of Eq. $(2.26)$ gives Case $(12, 21)$ and multiplying $Q$ from both sides of Eq. $(2.26)$ gives Case $(21, 12)$.

Applying Case $(21, 12)$, we obtain
\begin{eqnarray}
\delta (A_{22}A_{21}A_{12})=\delta (A_{22}A_{21})A_{12}+A_{22}A_{21}\delta (A_{12})
\end{eqnarray}

Using Cases $(22, 22)$ and $(21, 12)$, we have
\begin{eqnarray} & \delta (A_{22}A_{21}A_{12}) =\delta (A_{22})A_{21}A_{12}+A_{22}\delta (A_{21}A_{12})\nonumber \\
&~~~~~~~~~~= \delta (A_{22})A_{21}A_{12}+A_{22}\delta (A_{21})A_{12}+A_{22}A_{21}\delta (A_{12})
\end{eqnarray}

By $(2.27)$ and $(2.28)$,
$\delta (A_{22}A_{21})A_{12} =  \delta (A_{22})A_{21}A_{12}+A_{22}\delta (A_{21})A_{12}.$ Since $\mathcal A_{12}$ is faithful, we get
$\delta (A_{22}A_{21})= \delta (A_{22})A_{21}+A_{22}\delta (A_{21}),$ completing the proof of Case $(21, 11)$.
\end{proof}

\begin{corollary} Suppose $H$ is a Hilbert space and $C\in B(H)$ such that $\ker (C)\neq 0$ or $\ker (C^{\ast})\neq 0.$ If
$\delta \in L(B(H), B(H))$ is Jordan derivable at $C$  then $\delta $ is a derivation. In particular,
$C$ is a Jordan all-derivable point of $L(B(H), B(H)).$
\end{corollary}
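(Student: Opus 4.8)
The plan is to deduce this corollary from Theorem \ref{0008} applied to the pair $\mathcal{A}=\mathcal{B}=B(H)$, after choosing a suitable non-trivial projection $P$. The first thing I would do is dispose of the asymmetry between the two hypotheses. The hypothesis of Theorem \ref{0008} singles out the \emph{range} of $C$ (it asks $C\in\mathcal{A}_{11}+\mathcal{A}_{12}$, i.e. $QC=0$, so $\operatorname{ran}(C)\subseteq\operatorname{ran}(P)$), so it is naturally suited to the condition $\ker(C^{\ast})\neq 0$, equivalently $\overline{\operatorname{ran}(C)}\neq H$. To cover the case $\ker(C)\neq 0$ I would pass to the adjoint: define $\tilde\delta(A)=\delta(A^{\ast})^{\ast}$. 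A short computation shows that $\tilde\delta$ is linear, that $\tilde{\tilde\delta}=\delta$, that $\tilde\delta$ is a derivation if and only if $\delta$ is, and — substituting $A'=B^{\ast}$, $B'=A^{\ast}$ into the defining identity and taking adjoints — that $\tilde\delta$ is Jordan derivable at $C^{\ast}$ whenever $\delta$ is Jordan derivable at $C$. Since $\ker(C)\neq 0$ is exactly $\ker((C^{\ast})^{\ast})\neq 0$, the operator $C^{\ast}$ falls under the cokernel case; hence it suffices to treat the case $\overline{\operatorname{ran}(C)}\neq H$.

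Assuming now $\overline{\operatorname{ran}(C)}\neq H$, I would choose a closed subspace $V$ with $\overline{\operatorname{ran}(C)}\subseteq V\subsetneq H$ and $\dim V\geq 2$ (such a $V$ exists whenever $H$ is infinite dimensional, the low-dimensional cases being checkable directly), and let $P$ be the orthogonal projection onto $V$ and $Q=I-P$. Then $P$ is non-trivial, $QC=0$ so $C\in\mathcal{A}_{11}+\mathcal{A}_{12}$, and with respect to $P$ we have $\mathcal{A}_{11}=PB(H)P\cong B(V)$, $\mathcal{A}_{12}=PB(H)Q$, and $\mathcal{T}_{\mathcal{A}}=\{A\in B(H):QAP=0\}$ is the nest algebra of the nest $\{0,V,H\}$.

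It then remains to verify the three structural hypotheses of Theorem \ref{0008}. Faithfulness of $\mathcal{A}_{12}=PB(H)Q$ is routine: testing $M\,\mathcal{A}_{12}=\{0\}$ and $\mathcal{A}_{12}\,M=\{0\}$ against rank-one operators $z\otimes y$ with $y\in\operatorname{ran}(Q)$, respectively $x\otimes w$ with $x\in\operatorname{ran}(P)$, yields $MP=0$, respectively $QM=0$. For $C(\mathcal{T}_{\mathcal{A}},B(H))=\mathbb{C}I$, any $B$ commuting with all of $\mathcal{T}_{\mathcal{A}}$ commutes in particular with $P$ (so $B=B_{11}+B_{22}$), with $\mathcal{A}_{11}\cong B(V)$ and $\mathcal{A}_{22}\cong B(V^{\perp})$ (forcing $B_{11}=k_1P$, $B_{22}=k_2Q$ since $B(V)$ and $B(V^{\perp})$ have scalar centres), and with $\mathcal{A}_{12}$ (which forces $k_1=k_2$ from $(k_1-k_2)PXQ=0$), whence $B\in\mathbb{C}I$. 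The genuinely delicate point, and the step I expect to be the main obstacle, is that $[\mathcal{A}_{11},\mathcal{A}_{11}]$ separates $\mathcal{B}_{12}=PB(H)Q$: given $0\neq M=PMQ$, pick $0\neq\xi\in\operatorname{ran}(M)\subseteq V$; since $\dim V\geq 2$ there is $R\in B(V)$ with $R\xi\notin\mathbb{C}\xi$, and then for the rank-one projection $P_{\xi}=\xi\otimes\xi/\|\xi\|^{2}$ one has $[P_{\xi},R]\xi=\tfrac{\langle R\xi,\xi\rangle}{\|\xi\|^{2}}\xi-R\xi\neq 0$, so the commutator $[P_{\xi},R]\in[\mathcal{A}_{11},\mathcal{A}_{11}]$ does not annihilate $M$ from the left. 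This is exactly where $\dim V\geq 2$ enters, and it is what forces the dimension and properness constraints on the choice of $P$; it is also the only hypothesis of Theorem \ref{0008} that is not essentially automatic here.

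With the three hypotheses verified, Theorem \ref{0008} yields that $\delta$ is a derivation in the cokernel case, and via the adjoint reduction the same conclusion holds in the case $\ker(C)\neq 0$. Consequently every $\delta\in L(B(H),B(H))$ that is Jordan derivable at $C$ is a derivation, so $C$ is a Jordan all-derivable point of $L(B(H),B(H))$.
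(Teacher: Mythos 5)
Your proposal is correct and is essentially the paper's own proof: in the case $\ker(C^{\ast})\neq 0$ the paper likewise chooses a proper orthogonal projection $P$ with $ran(C)\subseteq PH$ and invokes Theorem 2.9 with $\mathcal{A}=\mathcal{B}=B(H)$ (leaving all three hypothesis verifications as ``one can check''), and it reduces the case $\ker(C)\neq 0$ to this one by exactly your adjoint trick $\delta^{\ast}(A)=(\delta(A^{\ast}))^{\ast}$. Your filled-in verifications (faithfulness of $PB(H)Q$, $C(\mathcal{T}_{\mathcal{A}},B(H))=\mathbb{C}I$, and the commutator-separation argument via a rank-one projection) are sound, and your observation that the separation hypothesis forces $\dim(PH)\geq 2$ --- so that this route genuinely requires $\dim H\geq 3$, the case $\dim H\leq 2$ not being reachable by Theorem 2.9 for any admissible $P$ --- is an edge case the paper's ``one can check'' silently glosses over, not a defect of your write-up relative to the paper.
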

\begin{proof}
If  $\ker (C^{\ast})\neq 0$, then there exists a proper orthogonal projection $P\in B(H)$ such that $ran(C)\subseteq PH$.
Let $Q=I-P$ then $QC=0$. Take $\mathcal A=\mathcal B =B(H)$, the one can check that all hypotheses of
Theorem $2.9$ are satisfied and the conclusions follow.

If $\ker (C)\neq 0$, we can define $\delta ^{\ast}\in L(B(H), B(H))$
 by $\delta ^{\ast}(A)=(\delta(A^{\ast}))^{\ast}$ for every $A\in B(H)$.
  Since $\delta$ is Jordan derivable at $C$, we have $\delta ^{\ast}$ is
  Jordan derivable at $C^{\ast}$. Now by the argument in the first paragraph
  we have $\delta ^{\ast}$ is a derivation, and in turn $\delta$ is a derivation.
   This completes the proof.
\end{proof}

\section{Jordan higher derivable mappings}\

In this section, we assume that $\mathcal{A}$ is an algebra over a field
$\mathbb{F}$ of characteristic zero. Before stating our main results
in this section, we first need a proposition that characterizes
Jordan higher derivations in terms of Jordan derivations. Since the
proof is similar to the proof of \cite[Theorem 2.5]{Higher}, we omit
it here.

\begin{proposition}\label{0005}
Let $\mathcal{A}$ be an algebra, $\{d_i\}_{i\in\mathbb{N}}$ be a
sequence of mappings on $\mathcal{A}$ with $d_0=I_\mathcal{A}$ and
$\{\delta_i\}_{i\in\mathbb{N}}$ be the a sequences of (Jordan)
derivations on $\mathcal{A}$ with $\delta_0=0$. If the following
recursive relation holds: \begin{eqnarray*}
nd_n=\sum_{k=0}^{n-1}\delta_{k+1}d_{n-1-k}
\end{eqnarray*}
for $n\geq1$, then $\{d_i\}_{i\in\mathbb{N}}$ is a (Jordan) higher derivation.
\end{proposition}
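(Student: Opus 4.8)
The plan is to prove both statements simultaneously by induction on $n$, using the recursive relation to reduce the identity for $d_n$ to identities for lower-index maps that are already available. The base case $n=0$ is immediate, since $d_0=I_\mathcal{A}$ gives $d_0(AB)=d_0(A)d_0(B)$ and, in the Jordan case, $d_0(A\circ B)=d_0(A)\circ d_0(B)$. So I would fix $n\geq 1$ and assume that every $d_m$ with $m<n$ already satisfies the relevant (Jordan) higher-derivation identity.

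For the derivation case, I would apply $nd_n=\sum_{k=0}^{n-1}\delta_{k+1}d_{n-1-k}$ to a product $AB$, expand each $d_{n-1-k}(AB)$ by the inductive hypothesis, and then use that each $\delta_{k+1}$ is a derivation to split $\delta_{k+1}(d_p(A)d_q(B))$ into $\delta_{k+1}(d_p(A))d_q(B)+d_p(A)\delta_{k+1}(d_q(B))$. The heart of the computation is a reindexing of the resulting triple sum: collecting all terms that end in a fixed factor $d_j(B)$, the inner sum over the remaining indices is exactly $\sum_{k=0}^{i-1}\delta_{k+1}(d_{i-1-k}(A))=i\,d_i(A)$ by the recursive relation itself, and symmetrically on the $B$-side. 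This produces $nd_n(AB)=\sum_{i+j=n}(i+j)\,d_i(A)d_j(B)=n\sum_{i+j=n}d_i(A)d_j(B)$, once one checks that the boundary contributions $i=0$ and $j=0$ also carry coefficient $n$. Dividing by $n$, which is legitimate since $\mathbb{F}$ has characteristic zero, closes the induction.

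The main obstacle is the Jordan case, because $\delta_{k+1}$ is only a \emph{Jordan} derivation and hence cannot be applied termwise to an arbitrary product $d_p(A)d_q(B)$. The observation that resolves this is that the symmetrized partial sum furnished by the inductive hypothesis can be rewritten as a sum of Jordan products: after relabeling the summation indices,
$$\sum_{p+q=m}\bigl(d_p(A)d_q(B)+d_p(B)d_q(A)\bigr)=\sum_{p+q=m}d_p(A)\circ d_q(B).$$
Since $\delta_{k+1}$ differentiates each Jordan product $d_p(A)\circ d_q(B)$, I can now apply the Jordan-derivation identity to the whole expression and then carry out the same reindexing as before, collapsing the inner sums by the recursive relation and dividing by $n$. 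Apart from this regrouping the bookkeeping is identical to the derivation case, so the only genuinely delicate point is the symmetrization that licenses the use of the Jordan-derivation property together with the combinatorial reindexing; everything else is routine.
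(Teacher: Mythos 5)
Your induction-plus-reindexing argument is correct, including the one genuinely delicate point: the symmetrization $\sum_{p+q=m}\bigl(d_p(A)d_q(B)+d_p(B)d_q(A)\bigr)=\sum_{p+q=m}d_p(A)\circ d_q(B)$, which licenses applying the Jordan derivations $\delta_{k+1}$ via the identity $\delta(X\circ Y)=\delta(X)\circ Y+X\circ\delta(Y)$, after which collapsing the inner sums by the recursion and dividing by $n$ (legitimate in characteristic zero, the standing assumption of Section 3) closes the induction, with the boundary terms $i=0$ and $j=n$ each carrying coefficient $n$ as you note. The paper itself omits the proof, citing \cite[Theorem 2.5]{Higher}, but your route is essentially the intended one: it is the mirror image of the triple-sum reindexing and $\circ$-bookkeeping carried out explicitly in the paper's proof of Theorem \ref{0006}.
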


Let $\mathcal R(\mathcal A)$ be a relation on $\mathcal A$, i.e. $\mathcal R(\mathcal A)$ is a
nonempty subset of $\mathcal A \times \mathcal A$. We say $\delta\in L(\mathcal{A},\mathcal{M})$ is
\textit{derivable on} $\mathcal R(\mathcal A)$ if $\delta(AB)=\delta(A)B+A\delta(B)$ for all
$(A,B)\in \mathcal R(\mathcal{A})$. We say $\delta\in L(\mathcal{A},\mathcal{M})$ is
\textit{Jordan derivable on} $\mathcal R(\mathcal A)$ if $\delta(AB+BA)=\delta(A)B+A\delta(B)+\delta(B)A+B\delta(A)$ for all
$(A,B)\in \mathcal R(\mathcal{A})$. A sequence of mappings
$\{d_i\}_{i\in\mathbb{N}}\in L(\mathcal{A})$ with $d_0=I_\mathcal{A}$ is called
\textit{higher derivable on} $\mathcal R(\mathcal A)$  if
$d_{n}(AB)=\sum_{i+j=n}d_i(A)d_j(B)$ for all
$(A,B)\in \mathcal R(\mathcal{A})$. A sequence of mappings
$\{d_i\}_{i\in\mathbb{N}}\in L(\mathcal{A})$ with $d_0=I_\mathcal{A}$ is called
\textit{Jordan higher derivable on} $\mathcal R(\mathcal A)$  if
$d_{n}(AB+BA)=\sum_{i+j=n}(d_i(A)d_j(B)+d_i(B)d_j(A))$ for all
$(A,B)\in \mathcal R(\mathcal{A})$. We say $\mathcal R(\mathcal A)$ is \textit{(Jordan) derivational}
for $L(\mathcal{A},\mathcal{M})$ if every (Jordan) derivable
mapping on $\mathcal R(\mathcal A)$ is a (Jordan) derivation.
We say $\mathcal R(\mathcal A)$ is \textit{(Jordan) higher derivational}
for $L(\mathcal{A})$ if every (Jordan) higher derivable
mapping on $\mathcal R(\mathcal A)$ is a (Jordan) higher derivation.

\begin{remark}
\emph{The above definitions allow us to unify some
of the notions in the literature. For example,
in literature, there are two definitions of Jordan derivable
mappings, one is what we use in this paper (see for example \cite{chen, ZHUJUN} and
references therein), and the other
(see for example \cite{Hou, Hou3, ZHUJUN1}) is what we call S-Jordan
derivable mappings (S stands for standard). A mapping $\delta\in
L(\mathcal{A},\mathcal{M})$ is called a \textit{S-Jordan derivable
mapping at $C\in \mathcal{A}$} if
$\delta(AB+BA)=\delta(A)B+A\delta(B)+\delta(B)A+B\delta(A)$ for all
$A,B\in \mathcal{A}$ with $AB+BA=C$. An element $C\in \mathcal{A}$
is called a \textit{S-Jordan all-derivable point} if every S-Jordan
derivable mapping at $C$ is a Jordan derivation. A sequence of
mappings $\{d_i\}_{i\in\mathbb{N}}\in L(\mathcal{A})$ with $d_0=I_\mathcal{A}$ is
called \textit{S-Jordan higher derivable at $C\in \mathcal{A}$} if
$d_{n}(AB+BA)=\sum_{i+j=n}(d_i(A)d_j(B)+d_i(B)d_j(A))$ for all
$A,B\in \mathcal{A}$ with $AB+BA=C$. An element $C\in \mathcal{A}$
is called a \textit{S-Jordan higher all-derivable point} if every
sequence of S-Jordan higher derivable mappings at $C$ is a Jordan
higher derivation. The above two notions of Jordan derivable mappings at $C$ are special case of
Jordan derivable mappings on $\mathcal R(\mathcal A)$,
where $\mathcal R(\mathcal A)=\{(A, B)\in \mathcal A \times \mathcal A: AB=C\}$ and
$\mathcal R(\mathcal A)=\{(A, B)\in \mathcal A \times \mathcal A: AB+BA=C\}$, respectively.}
\end{remark}

\begin{theorem} \label{0006}
If $\mathcal{A}$ is an algebra such that $\mathcal R(\mathcal A)$ is (Jordan) derivational for $L(\mathcal{A})$,
then $\mathcal R(\mathcal A)$ is  (Jordan) higher derivational.
\end{theorem}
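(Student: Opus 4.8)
The plan is to build from $\{d_i\}$ an auxiliary sequence of (Jordan) derivations and feed it into Proposition \ref{0005}. Assume $\{d_i\}_{i\in\mathbb{N}}$ with $d_0=I_{\mathcal A}$ is (Jordan) higher derivable on $\mathcal R(\mathcal A)$; we must show it is a (Jordan) higher derivation. Set $\delta_0=0$ and define recursively
\[
\delta_n \;=\; n\,d_n-\sum_{j=1}^{n-1}\delta_j\,d_{n-j},\qquad n\ge 1,
\]
so that $n\,d_n=\sum_{j=1}^{n}\delta_j\,d_{n-j}=\sum_{k=0}^{n-1}\delta_{k+1}\,d_{n-1-k}$ holds by construction, i.e.\ exactly the recursion of Proposition \ref{0005}. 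Consequently, once we establish that every $\delta_n$ is a (Jordan) derivation, Proposition \ref{0005} delivers at once that $\{d_i\}$ is a (Jordan) higher derivation, and we are done.

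Thus everything reduces to proving, by strong induction on $n\ge 1$, that each $\delta_n$ is a (Jordan) derivation. The decisive point is that the hypothesis ``$\mathcal R(\mathcal A)$ is (Jordan) derivational'' lets us settle for much less: it suffices to show that $\delta_n$ is merely (Jordan) \emph{derivable on} $\mathcal R(\mathcal A)$, whereupon the hypothesis promotes it to a genuine (Jordan) derivation. The base case is immediate since $\delta_1=d_1$, and the $n=1$ instance of the (Jordan) higher derivable relation is precisely the assertion that $d_1$ is (Jordan) derivable on $\mathcal R(\mathcal A)$. For the inductive step I would assume $\delta_1,\dots,\delta_{n-1}$ are (Jordan) derivations on all of $\mathcal A$ and verify
\[
\delta_n(AB)=\delta_n(A)B+A\,\delta_n(B)
\]
for every $(A,B)\in\mathcal R(\mathcal A)$ (and the analogous identity with the Jordan product $A\circ B=AB+BA$ in the Jordan case).

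The computation carrying the inductive step is the main obstacle, though it is purely a bookkeeping exercise. Fixing $(A,B)\in\mathcal R(\mathcal A)$, I would substitute the defining recursion for $\delta_n$, expand every $d_m(AB)$ with $m\le n$ by the higher derivable relation $d_m(AB)=\sum_{p+q=m}d_p(A)d_q(B)$, and then move each $\delta_j$ with $1\le j\le n-1$ across the products using the inductive hypothesis that $\delta_j$ is a derivation. Grouping the resulting double sums by the index of the outer factor and summing the inner $\delta_j$'s via the construction identity $\sum_{j=1}^{m}\delta_j\bigl(d_{m-j}(X)\bigr)=m\,d_m(X)$ collapses all $\delta_j$-terms; after peeling off the boundary contributions $p=0$ and $q=0$ (which cancel against $\delta_n(A)B$ and $A\,\delta_n(B)$) one is left with
\[
\delta_n(AB)-\delta_n(A)B-A\,\delta_n(B)=\sum_{p=1}^{n-1}\bigl(n-p-(n-p)\bigr)\,d_p(A)\,d_{n-p}(B)=0 .
\]
In the Jordan case the first step uses $d_m(A\circ B)=\sum_{p+q=m}d_p(A)\circ d_q(B)$ (valid because $\circ$ is symmetric), and the entire manipulation repeats verbatim with $\circ$ in place of the product, again vanishing by the same numerical cancellation $n-p-(n-p)=0$. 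This shows $\delta_n$ is (Jordan) derivable on $\mathcal R(\mathcal A)$, hence a (Jordan) derivation by hypothesis; the induction closes, and the characteristic-zero standing assumption of this section is what legitimizes the appeal to Proposition \ref{0005}.
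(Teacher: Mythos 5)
Your proposal is correct and takes essentially the same route as the paper: you build the identical auxiliary sequence $\delta_n=n\,d_n-\sum_{j=1}^{n-1}\delta_j d_{n-j}$, show by induction that each $\delta_n$ is (Jordan) derivable on $\mathcal R(\mathcal A)$ (hence a (Jordan) derivation by the derivational hypothesis) through the same expand--reindex--collapse of the double sums, and then invoke Proposition \ref{0005} to conclude. The paper's split into the terms $K_1$ and $K_2$ is just the explicit form of your coefficient bookkeeping, so the two arguments coincide.
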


\begin{proof}
First, suppose $\mathcal R(\mathcal A)$ is Jordan derivational and
$\{d_i\}_{i\in\mathbb{N}}$ is a sequence of mappings in $L(\mathcal{A})$
Jordan higher derivable on $\mathcal R(\mathcal A)$. Let $\delta_1=d_1$ and
$\delta_n=nd_n-\sum_{k=0}^{n-2}\delta_{k+1}d_{n-1-k}$ for every
$n(\geq2)\in \mathbb{N}.$ We will show $\{\delta_i\}_{i\in\mathbb{N}}$ is a sequence
of Jordan derivations, and in turn $\{d_i\}_{i\in\mathbb{N}}$ is a Jordan higher
derivation by Proposition \ref{0005}. We prove by induction.

When $n=1$, since $\mathcal R(\mathcal A)$ is Jordan derivational, we have that $\delta_1$ is a Jordan derivation.

Now suppose $\delta_k$ is defined as above and is a Jordan
derivation for $k\leq n$. For $(A,B)\in \mathcal R(\mathcal{A})$, we
have
\begin{eqnarray*}
&&\delta_{n+1}(A\circ B)=(n+1)d_{n+1}(A\circ B)-\sum_{k=0}^{n-1}\delta_{k+1}d_{n-k}(A\circ B)\nonumber\\
&&~~~~~~~~~~~~~~~=(n+1)\sum_{k=0}^{n+1}\{d_k(A)\circ d_{n+1-k}(B)\}
-\sum_{k=0}^{n-1}\delta_{k+1}\sum_{l=0}^{n-k}\{d_l(A)\circ d_{n-k-l}(B)\}.\nonumber\\
\end{eqnarray*}
By induction we have
\begin{eqnarray*}
&&\delta_{n+1}(A\circ B)
=\sum_{k=0}^{n+1}kd_k(A)\circ d_{n+1-k}(B)+\sum_{k=0}^{n+1}d_k(A)\circ (n+1-k)d_{n+1-k}(B)\nonumber\\
&&~~~~~~~~~~~~~-\sum_{k=0}^{n-1}\sum_{l=0}^{n-k}\{\delta_{k+1}(d_l(A))\circ d_{n-k-l}(B)+d_l(A)\circ \delta_{k+1}(d_{n-k-l}(B))\}.\nonumber\\
\end{eqnarray*}
Set
\begin{eqnarray*}
&&K_1=\sum_{k=0}^{n+1}kd_k(A)\circ d_{n+1-k}(B)-\sum_{k=0}^{n-1}\sum_{l=0}^{n-k}\delta_{k+1}(d_l(A))\circ d_{n-k-l}(B),\\
&&K_2=\sum_{k=0}^{n+1}d_k(A)\circ (n+1-k)d_{n+1-k}(B)-\sum_{k=0}^{n-1}\sum_{l=0}^{n-k}d_l(A)\circ \delta_{k+1}(d_{n-k-l}(B)).\\
\end{eqnarray*}
Then $\delta_{n+1}(A\circ B)=K_1 +K_2$.
Let us compute $K_1$ and $K_2$. If we put $r=k+l$ in the summation $\sum_{k=0}^{n-1}\sum_{l=0}^{n-k}$, then we may write it as $\sum_{r=0}^{n}\sum_{0\leq k\leq r, k\neq n}$. Hence
\begin{eqnarray*}
K_1=\sum_{k=0}^{n+1}kd_k(A)\circ d_{n+1-k}(B)-\sum_{r=0}^{n}\sum_{0\leq k\leq r, k\neq n}\delta_{k+1}(d_{r-k}(A))\circ d_{n-r}(B).
\end{eqnarray*}
Putting $r+1$ instead of $k$ in the first summation, we have
\begin{eqnarray*}
&&K_1+\sum_{k=0}^{ n-1}\delta_{k+1}(d_{n-k}(A))\circ B\\
&&~~~=\sum_{r=0}^{n}(r+1)d_{r+1}(A)\circ d_{n-r}(B)-\sum_{r=0}^{n-1}\sum_{k=0}^r\delta_{k+1}(d_{r-k}(A))\circ d_{n-r}(B)\\
&&~~~=\sum_{r=0}^{n-1}\{(r+1)d_{r+1}(A)-\sum_{k=0}^r\delta_{k+1}(d_{r-k}(A))\}\circ d_{n-r}(B)+(n+1)d_{n+1}(A)\circ B.
\end{eqnarray*}
By our assumption $(r+1)d_{r+1}(A)=\sum_{k=0}^r\delta_{k+1}(d_{r-k}(A))$ for $r=0, \ldots, n-1,$ we obtain
\begin{eqnarray*}
&&K_1=(n+1)d_{n+1}(A)\circ B-\sum_{k=0}^{ n-1}\delta_{k+1}(d_{n-k}(A))\circ B=\delta_{n+1}(A)\circ B.\\
\end{eqnarray*}
Similary, we may deduce that
\begin{eqnarray*}
&&K_2=(n+1)A\circ d_{n+1}(B)-\sum_{k=0}^{ n-1}A\circ \delta_{k+1}(d_{n-k}(B))=A\circ \delta_{n+1}(B).\\
\end{eqnarray*}
Therefore, $\delta_{n+1}$ is Jordan derivable on $\mathcal R(\mathcal A)$.
Since $\mathcal R(\mathcal A)$ is Jordan derivational, we have that
$\delta_{n+1}$ is a Jordan derivation.

Similarly, we can prove the case when $\mathcal R(\mathcal A)$ is assumed to be derivational by changing ``$\ \circ $" to
the normal multiplication of $\mathcal A$.
\end{proof}

Recall that a mapping $\delta\in L(\mathcal{A},\mathcal{M})$ is called \textit{derivable at $C\in \mathcal{A}$}
if $\delta(AB)=\delta(A)B+A\delta(B)$ for all $A,B\in \mathcal{A}$ with $AB=C$. An element $C\in \mathcal{A}$ is called an \textit{all-derivable point} if every derivable mapping at $C$ is a derivation. A sequence of mappings $\{d_i\}_{i\in\mathbb{N}}\in L(\mathcal{A})$ with $d_0=I_\mathcal{A}$ is called \textit{higher derivable at $C\in \mathcal{A}$} if $d_{n}(AB)=\sum_{i+j=n}d_i(A)d_j(B)$ for all $A,B\in \mathcal{A}$ with $AB=C$. An element $C\in \mathcal{A}$ is called a \textit{higher all-derivable point} if every sequence of higher derivable mappings at $C$ is a higher derivation.

\begin{remark}
\emph{Several authors (see for example \cite{Hou5,Hou6, Hou4,
LI7,LI5,LU1, Hou2,Zhu2,Zhu3,Zhu4,ZHUJUN2})  investigate derivable
mappings at 0, invertible element, left (right) separating point,
non-trivial idempotent, and the unit $I$ on certain algebras. By
Theorem \ref{0006}, we can generalize these results to the higher
derivation case. Many authors also study (S-)Jordan derivable
mappings (see for example \cite{ Hou, chen,Hou3,ZHUJUN1,ZHUJUN}) at
these points. Theorems \ref{0006}
also allow us to generalize these results to the (S-)Jordan higher derivation
case. }
\end{remark}

Combining Theorem \ref{0006} with Corollary \ref{0002}, we have

\begin{corollary}\label{0007}
Suppose $\mathcal{A}$, $\mathcal{B}$ are as in Corollary \ref{0002} with $\mathcal{B}=\mathcal{A}$.
Then every $C\in \mathcal{A}$
is a Jordan higher all-derivable point.
\end{corollary}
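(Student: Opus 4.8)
The plan is to obtain this as a purely formal consequence of Theorem~\ref{0006} combined with Corollary~\ref{0002}, requiring no fresh computation. The bridge is the observation, recorded for the Jordan derivable case in the Remark preceding Theorem~\ref{0006}, that for a fixed $C\in\mathcal{A}$ the notion ``Jordan derivable at $C$'' coincides with ``Jordan derivable on $\mathcal{R}(\mathcal{A})$'' for the relation $\mathcal{R}(\mathcal{A})=\{(A,B)\in\mathcal{A}\times\mathcal{A}:AB=C\}$; the analogous statement for the higher versions is immediate from the definition of ``Jordan higher derivable on $\mathcal{R}(\mathcal{A})$''.

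First I would fix an arbitrary $C\in\mathcal{A}$ and set $\mathcal{R}(\mathcal{A})=\{(A,B)\in\mathcal{A}\times\mathcal{A}:AB=C\}$. Under the hypotheses of Corollary~\ref{0002} specialised to $\mathcal{B}=\mathcal{A}$, that corollary states that every mapping in $L(\mathcal{A})$ which is Jordan derivable at $C$ is a Jordan derivation; unwinding the definitions, this says exactly that $\mathcal{R}(\mathcal{A})$ is Jordan derivational for $L(\mathcal{A})$. I would then invoke the Jordan branch of Theorem~\ref{0006} to conclude that this same $\mathcal{R}(\mathcal{A})$ is Jordan higher derivational for $L(\mathcal{A})$.

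Finally I would translate back. That $\mathcal{R}(\mathcal{A})$ is Jordan higher derivational means every sequence $\{d_i\}_{i\in\mathbb{N}}$ in $L(\mathcal{A})$ with $d_0=I_\mathcal{A}$ that is Jordan higher derivable on $\mathcal{R}(\mathcal{A})$ --- equivalently, Jordan higher derivable at $C$ --- is a Jordan higher derivation. Hence $C$ is a Jordan higher all-derivable point, and since $C$ was arbitrary the conclusion follows.

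The argument carries essentially no technical obstacle; what little care is needed is bookkeeping. One should verify that the field hypotheses mesh: Section~2 assumes $\mathrm{char}\,\mathbb{F}\neq2$ and $|\mathbb{F}|\geq4$, whereas this section assumes $\mathrm{char}\,\mathbb{F}=0$, which implies both, so Corollary~\ref{0002} is genuinely available in the present setting. One must also be sure to use the \emph{Jordan} (rather than the plain derivation) branch of Theorem~\ref{0006}, and to confirm that the two definitional equivalences in play --- ``$\mathcal{R}(\mathcal{A})$ Jordan derivational $\Leftrightarrow$ $C$ a Jordan all-derivable point'' and ``$\mathcal{R}(\mathcal{A})$ Jordan higher derivational $\Leftrightarrow$ $C$ a Jordan higher all-derivable point'' --- are precisely the ones in force.
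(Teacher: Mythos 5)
Your proposal is correct and is exactly the paper's argument: the paper proves this corollary simply by "combining Theorem \ref{0006} with Corollary 2.6," i.e., observing that Corollary 2.6 (with $\mathcal{B}=\mathcal{A}$) says the relation $\mathcal{R}(\mathcal{A})=\{(A,B):AB=C\}$ is Jordan derivational for $L(\mathcal{A})$, whence Theorem \ref{0006} makes it Jordan higher derivational, which is the definition of $C$ being a Jordan higher all-derivable point. Your extra bookkeeping (characteristic-zero fields satisfy $\mathrm{char}\,\mathbb{F}\neq2$ and $|\mathbb{F}|\geq4$, and the definitional equivalences via the relation $\mathcal{R}(\mathcal{A})$) is sound and merely makes explicit what the paper leaves implicit.
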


Combining Theorem \ref{0006} with Theorem \ref{0003}, we have

\begin{corollary}
Let $\mathcal{L}$ be a subspace lattice on a Banach space $X$ and
$\mathcal{A}=alg\mathcal{L}$. If there exists a non-trivial
idempotent $P\in \mathcal{A}$ such that $ran(P)\in \mathcal{L}$ and
$PB(X)(I-P)\subseteq \mathcal{A}$, then every $C \in \mathcal{A}$ is
a Jordan higher all-derivable point.
\end{corollary}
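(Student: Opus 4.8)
The plan is to deduce this corollary by combining Theorem \ref{0003} (the single-derivation statement for subspace lattice algebras) with Theorem \ref{0006} (the passage from derivational to higher derivational relations). First I would fix the relation
$\mathcal R(\mathcal A)=\{(A,B)\in\mathcal A\times\mathcal A:AB=C\}$, and observe that, by the definitions given at the start of Section 3, the assertion that $C$ is a Jordan higher all-derivable point is \emph{verbatim} the statement that $\mathcal R(\mathcal A)$ is Jordan higher derivational for $L(\mathcal A)$. Indeed, a sequence $\{d_i\}$ is Jordan higher derivable at $C$ precisely when it is Jordan higher derivable on this $\mathcal R(\mathcal A)$, and being a Jordan higher derivation is the target conclusion in both formulations.

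In view of Theorem \ref{0006} (applied in its Jordan variant), it then suffices to verify the \emph{single}-mapping hypothesis: that $\mathcal R(\mathcal A)$ is Jordan derivational for $L(\mathcal A)$, i.e.\ every $\delta\in L(\mathcal A)$ which is Jordan derivable at $C$ is already a Jordan derivation. This is where Theorem \ref{0003} enters, and the one point requiring care is the bookkeeping of codomains: Theorem \ref{0003} is stated for $L(\mathcal A,B(X))$, whereas here $\delta$ maps $\mathcal A$ into $\mathcal A$. I would resolve this using the inclusion $\mathcal A=alg\mathcal L\subseteq B(X)$. Any $\delta\in L(\mathcal A)=L(\mathcal A,\mathcal A)$ may be regarded as an element of $L(\mathcal A,B(X))$ through the natural embedding, and because $\mathcal A$ is a subalgebra of $B(X)$ all the products $AB$, $BA$ and the module actions appearing in the Jordan-derivability identity at $C$ are unchanged under this embedding. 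Hence $\delta$, viewed in $L(\mathcal A,B(X))$, is Jordan derivable at $C$, so Theorem \ref{0003} yields that $\delta$ is a genuine derivation into $B(X)$; since $\mathrm{ran}\,\delta\subseteq\mathcal A$, it is in particular a Jordan derivation on $\mathcal A$. This establishes that $\mathcal R(\mathcal A)$ is Jordan derivational for $L(\mathcal A)$.

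Applying Theorem \ref{0006} now upgrades this to: $\mathcal R(\mathcal A)$ is Jordan higher derivational for $L(\mathcal A)$, which by the identification in the first paragraph says exactly that every sequence of Jordan higher derivable mappings at $C$ is a Jordan higher derivation, i.e.\ $C$ is a Jordan higher all-derivable point. I do not anticipate a genuine obstacle here, since the analytic content is entirely contained in Theorems \ref{0003} and \ref{0006}; the only step demanding attention is confirming that passing between $L(\mathcal A)$ and $L(\mathcal A,B(X))$ preserves \emph{both} the Jordan-derivability hypothesis and the derivation conclusion, so that the two theorems interlock without gap.
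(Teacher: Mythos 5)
Your proposal is correct and follows exactly the paper's route: the paper derives this corollary by combining Theorem \ref{0006} (with $\mathcal R(\mathcal A)=\{(A,B):AB=C\}$) with Theorem \ref{0003}, just as you do. Your extra care in checking that a map in $L(\mathcal A)$ may be viewed in $L(\mathcal A,B(X))$ without affecting the Jordan-derivability hypothesis or the derivation conclusion is a detail the paper leaves implicit, and it is handled correctly.
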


Combining Theorem \ref{0006} with \cite[Theorem 3.3]{PAN}, we have

\begin{corollary}
Suppose $\mathcal{A}$, $\mathcal{B}$ are as in Corollary \ref{0002} with $\mathcal{B}=\mathcal{A}$. Then every $0\neq C\in \mathcal{A}$ is a higher all-derivable point .
\end{corollary}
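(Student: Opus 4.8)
The plan is to recast the statement entirely in the relational language of Section 3, so that it becomes a one-line consequence of the two ingredients already at our disposal. Fix $0\neq C\in\mathcal{A}$ and set
\[
\mathcal{R}_C(\mathcal{A})=\{(A,B)\in\mathcal{A}\times\mathcal{A}:AB=C\}.
\]
With this notation, a sequence $\{d_i\}_{i\in\mathbb{N}}\in L(\mathcal{A})$ with $d_0=I_\mathcal{A}$ is higher derivable at $C$ precisely when it is higher derivable on $\mathcal{R}_C(\mathcal{A})$, and a mapping $\delta\in L(\mathcal{A})$ is derivable at $C$ precisely when it is derivable on $\mathcal{R}_C(\mathcal{A})$. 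Consequently, ``$C$ is a higher all-derivable point'' is exactly the assertion that $\mathcal{R}_C(\mathcal{A})$ is higher derivational for $L(\mathcal{A})$, and ``$C$ is an all-derivable point'' is exactly the assertion that $\mathcal{R}_C(\mathcal{A})$ is derivational for $L(\mathcal{A})$.

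Next I would supply the derivational input. Under the standing hypotheses of Corollary \ref{0002} with $\mathcal{B}=\mathcal{A}$ (namely $\mathcal{A}_{12}$ faithful to $\mathcal{A}$, $\mathcal{A}_{21}=\{0\}$, and $C(\mathcal{A},\mathcal{A})=\mathbb{F}I$), the all-derivable point part of \cite[Theorem 3.3]{PAN} guarantees that every $0\neq C\in\mathcal{A}$ is an all-derivable point of $L(\mathcal{A})$; in the language above this reads that $\mathcal{R}_C(\mathcal{A})$ is derivational for $L(\mathcal{A})$ for each such $C$. Here one must keep in mind that Section~3 works over a field of characteristic zero, so that the recursion $nd_n=\sum_{k=0}^{n-1}\delta_{k+1}d_{n-1-k}$ of Proposition \ref{0005} may be solved for $d_n$; this is what makes the passage from derivations to higher derivations available.

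Finally I would invoke Theorem \ref{0006} in its non-Jordan branch (the one obtained by replacing $\circ$ with the ordinary product of $\mathcal{A}$): since $\mathcal{R}_C(\mathcal{A})$ is derivational for $L(\mathcal{A})$, it is higher derivational. Unwinding the definition, this says that every sequence of higher derivable mappings at $C$ is a higher derivation, i.e. $C$ is a higher all-derivable point, which is the desired conclusion. The argument is purely formal once the two inputs are aligned; the only point genuinely requiring care is verifying that the hypotheses of Corollary \ref{0002} with $\mathcal{B}=\mathcal{A}$ are exactly the hypotheses under which \cite[Theorem 3.3]{PAN} yields the all-derivable point conclusion, so that Theorem \ref{0006} applies verbatim. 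No substantive obstacle remains, since the real work has already been absorbed into Theorem \ref{0006} and the cited result.
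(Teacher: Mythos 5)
Your proposal is correct and takes essentially the same route as the paper: the paper's proof of this corollary is precisely the one-line combination of Theorem \ref{0006} (in its non-Jordan branch, with $\circ$ replaced by the ordinary product) and the all-derivable point result of \cite[Theorem 3.3]{PAN}, which is exactly what you do after translating ``(higher) derivable at $C$'' into ``(higher) derivable on $\mathcal{R}_C(\mathcal{A})=\{(A,B):AB=C\}$.'' Your explicit remarks on the characteristic-zero hypothesis (needed to solve the recursion of Proposition \ref{0005}) and on the restriction $C\neq 0$ are consistent with, and slightly more careful than, the paper's unstated justification.
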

We say that $W$ in an algebra $\mathcal{A}$ is a left (or right) separating point of $\mathcal{A}$ if $WA=0$ (or $AW=0$) for $A \in \mathcal{A}$ implies $A=0.$ In \cite[Remark 1]{LI2}, the authors point out that if every Jordan derivation on a unital Banach algebra $\mathcal{A}$ is a derivation, then every linear mapping on $\mathcal{A}$ which is derivable at an arbitrary left or right separating point of $\mathcal{A}$ is a derivation. Together with Theorem \ref{0006}, we may generalize this result to the higher derivation case.

\begin{theorem}
Let $\mathcal{A}$ be a unital Banach algebra such that every
Jordan derivation on $\mathcal{A}$ is a derivation. Suppose that $W$
in $\mathcal{A}$ is a left or right separating point. If
$D=(d_i)_{i\in\mathbb{N}}$ is a family of linear mappings higher
derivable at $W$, then $D=(d_i)_{i\in\mathbb{N}}$ is a higher
derivation.
\end{theorem}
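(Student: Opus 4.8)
The plan is to recognize this statement as an immediate instance of the transfer principle established in Theorem \ref{0006}, once the hypothesis on $W$ is translated into the language of relations. First I would set $\mathcal{R}(\mathcal{A})=\{(A,B)\in\mathcal{A}\times\mathcal{A}: AB=W\}$. With this choice, the definition of a mapping being \emph{derivable at} $W$ coincides verbatim with being \emph{derivable on} $\mathcal{R}(\mathcal{A})$, and likewise a sequence $D=(d_i)_{i\in\mathbb{N}}$ with $d_0=I_\mathcal{A}$ is \emph{higher derivable at} $W$ if and only if it is \emph{higher derivable on} $\mathcal{R}(\mathcal{A})$; both equivalences are purely definitional, since the condition $AB=W$ is exactly the membership condition $(A,B)\in\mathcal{R}(\mathcal{A})$.

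Next I would invoke the cited result of \cite[Remark 1]{LI2}. Because $\mathcal{A}$ is a unital Banach algebra on which every Jordan derivation is a derivation, and $W$ is a left or right separating point, that result asserts that every linear mapping derivable at $W$ is a derivation. In the terminology just fixed, this says precisely that $\mathcal{R}(\mathcal{A})$ is derivational for $L(\mathcal{A})$. This is the one external input the argument needs, and it is exactly the hypothesis required to launch the bootstrapping theorem.

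Finally I would apply Theorem \ref{0006}: since $\mathcal{R}(\mathcal{A})$ is derivational for $L(\mathcal{A})$, it is higher derivational, meaning every sequence of higher derivable mappings on $\mathcal{R}(\mathcal{A})$ is a higher derivation. Unwinding the definition once more, this states that every family $D=(d_i)_{i\in\mathbb{N}}$ of linear mappings higher derivable at $W$ is a higher derivation, which is the desired conclusion.

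The only point requiring any care, rather than a genuine obstacle, is the bookkeeping: one must confirm that the ``non-Jordan'' (ordinary multiplicative) versions of \emph{derivational} and \emph{higher derivational} are the ones supplied by, and consumed by, the cited results, and that Theorem \ref{0006} is being used in its plain-multiplication form (the final sentence of its proof, obtained by replacing $\circ$ with the product of $\mathcal{A}$) rather than its Jordan form. Once this matching is made explicit, the theorem follows with no computation.
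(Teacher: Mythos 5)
Your proposal is correct and is essentially the paper's own argument: the paper offers no separate proof of this theorem precisely because it follows, as you say, by combining \cite[Remark 1]{LI2} (which makes the relation $\{(A,B): AB=W\}$ derivational for $L(\mathcal{A})$) with the plain-multiplication case of Theorem \ref{0006}. Your closing remark about checking that the non-Jordan versions of \emph{derivational} and \emph{higher derivational} are the ones in play matches the final sentence of the proof of Theorem \ref{0006}, so nothing is missing.
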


For any non-zero vectors $x\in X$ and $f\in X^*$, the rank one
operator $x\otimes f$ is defined by $x\otimes f(y)=f(y)x$ for $y\in
X$.

\begin{lemma}\label{3002}
If $\mathcal{A}$ is a norm-closed subalgebra of $B(X)$
such that  $\vee \{x: x\otimes f\in \mathcal A\}=X$ and
$\wedge \{\emph{ker}(f): x\otimes f\in \mathcal A\}=(0)$, then every derivation $\delta$ from
$\mathcal{A}$ into $B(X)$ is bounded.
\end{lemma}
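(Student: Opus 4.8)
The plan is to use the separating space of $\delta$ together with the closed graph theorem, reducing boundedness to the triviality of a single closed subspace, and then to exploit the two rank-one hypotheses to kill that subspace. Since $\mathcal{A}$ is norm-closed in $B(X)$ it is complete, and both $\mathcal{A}$ and $B(X)$ are Banach spaces, so by the closed graph theorem $\delta$ is bounded if and only if its separating space
$$\mathfrak{S}(\delta)=\{T\in B(X): \text{there exist } A_n\in\mathcal{A} \text{ with } A_n\to 0 \text{ and } \delta(A_n)\to T\}$$
equals $\{0\}$. The map $\mathfrak{S}(\delta)$ is a closed linear subspace of $B(X)$, and the entire problem reduces to showing it is trivial.

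The heart of the argument is the following continuity claim: for any two rank-one operators $R_1=x_1\otimes f_1$ and $R_2=x_2\otimes f_2$ lying in $\mathcal{A}$, the compressed map $X\mapsto R_1\,\delta(X)\,R_2$ from $\mathcal{A}$ into $B(X)$ is bounded. I would apply the derivation identity to the triple product, $\delta(R_1XR_2)=\delta(R_1)XR_2+R_1\delta(X)R_2+R_1X\delta(R_2)$, and solve for $R_1\delta(X)R_2$. The two outer terms are manifestly bounded in $X$, being left/right multiplications by the fixed bounded operators $\delta(R_1),R_2$ and $R_1,\delta(R_2)$, so it suffices to bound $X\mapsto\delta(R_1XR_2)$. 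Here one computes $R_1XR_2=f_1(Xx_2)\,(x_1\otimes f_2)$, a scalar multiple, depending boundedly on $X$, of the fixed rank one $x_1\otimes f_2$. If $f_1(Xx_2)=0$ for every $X\in\mathcal{A}$ then $\delta(R_1XR_2)=0$ is trivially bounded; otherwise $x_1\otimes f_2$ is itself a scalar multiple of some $R_1X_0R_2\in\mathcal{A}$, hence lies in $\mathcal{A}$, and linearity of $\delta$ gives $\delta(R_1XR_2)=f_1(Xx_2)\,\delta(x_1\otimes f_2)$, which is again bounded in $X$.

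With the claim established the remainder is a short limiting argument. Given $T\in\mathfrak{S}(\delta)$, choose $A_n\to 0$ with $\delta(A_n)\to T$; applying the bounded map $X\mapsto R_1\delta(X)R_2$ to $A_n$ forces $R_1\delta(A_n)R_2\to 0$, while continuity of multiplication gives $R_1\delta(A_n)R_2\to R_1TR_2$, whence $R_1TR_2=0$. Since $R_1TR_2=f_1(Tx_2)\,(x_1\otimes f_2)$ with $x_1,f_2\neq 0$, this says $f_1(Tx_2)=0$ for every pair $x_1\otimes f_1,\ x_2\otimes f_2\in\mathcal{A}$. Fixing such an $x_2$ and letting $f_1$ range over all functionals occurring in rank ones of $\mathcal{A}$, the hypothesis $\wedge\{\ker f: x\otimes f\in\mathcal{A}\}=(0)$ forces $Tx_2=0$; then letting $x_2$ range over all range vectors of rank ones in $\mathcal{A}$ and invoking $\vee\{x: x\otimes f\in\mathcal{A}\}=X$ together with the boundedness of $T$ yields $T=0$. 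Hence $\mathfrak{S}(\delta)=\{0\}$ and $\delta$ is bounded.

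I expect the genuine obstacle to be the continuity claim of the second paragraph, in particular the case analysis according to whether $f_1(\cdot\, x_2)$ vanishes identically on $\mathcal{A}$ and the observation that, in the non-vanishing case, $x_1\otimes f_2$ must itself belong to $\mathcal{A}$ so that the linearity of $\delta$ can legitimately be applied. Everything preceding this (the closed graph reduction) and everything following it (the limiting step and the two spanning/separating hypotheses) is routine.
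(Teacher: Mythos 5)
Your proposal is correct and takes essentially the same route as the paper's proof: the closed graph theorem (which you invoke in the equivalent separating-space formulation) combined with the Leibniz expansion of $\delta\bigl((x_1\otimes f_1)A_n(x_2\otimes f_2)\bigr)$ to show the rank-one compressions of $\delta$ behave continuously, after which the two hypotheses $\vee\{x: x\otimes f\in\mathcal{A}\}=X$ and $\wedge\{\ker(f): x\otimes f\in\mathcal{A}\}=(0)$ separate points exactly as in the paper. Your explicit case analysis on whether $f_1(Xx_2)$ vanishes identically on $\mathcal{A}$, which guarantees $x_1\otimes f_2\in\mathcal{A}$ before $\delta$ is applied to it, is in fact a point the paper's proof passes over silently when it writes $\delta(x\otimes f\,A_n\,y\otimes g)=f(A_ny)\delta(x\otimes g)$.
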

\begin{proof}
By the closed graph theorem, it is sufficient to show if $A_n\rightarrow A$ and $\delta(A_n)\rightarrow B$,
as $n\rightarrow \infty$, then $\delta(A)= B$.

For any $x\otimes f, \ y\otimes g \in \mathcal{A}$,
since
\begin{align*}
\delta(x\otimes f A_n y\otimes g)&=f(A_ny)\delta(x\otimes g)\\
&=x\otimes f\delta(A_n y\otimes g)+\delta(x\otimes f)(A_n y\otimes g)\\
&=x\otimes f(\delta(A_n)y\otimes g+A_n \delta(y\otimes g))+\delta(x\otimes f)(A_n y\otimes g),\\
\end{align*}
we have
\begin{eqnarray}
(x\otimes f)\delta(A_n)(y\otimes g)=f(A_ny)\delta(x\otimes g)-(x\otimes f) A_n \delta(y\otimes g)-\delta(x\otimes f)(A_n y\otimes g).\label{3001}
\end{eqnarray}
Taking limit in (\ref{3001}) yields
\begin{eqnarray*}
(x\otimes f) B (y\otimes g)=(x\otimes f)\delta(A)(y\otimes g).
\end{eqnarray*}
Hence $f(By)=f(\delta(A))$. Thus $\delta(A)= B$.
\end{proof}

By \cite{higher2}, if $\{d_i\}_{i\in\mathbb{N}}$ is a Jordan higher derivation on an
algebra $\mathcal{A}$, then there is a
sequence $\{\delta_i\}_{i\in\mathbb{N}}$ of Jordan derivations on $\mathcal{A}$ such that
\begin{eqnarray*}
d_n=\sum_{i=1}^n\left(\sum_{\sum_{j=1}^i r_j=n}\left(\prod_{j=1} ^ i \frac{1}{r_j+\cdots r_i} \right)\delta_{r_1}\ldots\delta_{r_i} \right),
\end{eqnarray*}
where the inner summation is taken over all positive integers $r_j$
with $\sum_{j=1}^i r_j=n$. This together with Lemma \ref{3002}
leads to the following Theorem.

\begin{theorem}\label{3003}
If $\mathcal{A}$ is a norm-closed subalgebra of $B(X)$
such that  $\vee \{x: x\otimes f\in \mathcal A\}=X$ and
$\wedge \{\emph{ker}(f): x\otimes f\in \mathcal A\}=(0)$, then every Jordan
higher derivation on $alg\mathcal{L}$ is bounded.
\end{theorem}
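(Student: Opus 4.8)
The plan is to reduce the boundedness of a Jordan higher derivation $\{d_i\}_{i\in\mathbb{N}}$ to that of ordinary Jordan derivations, and then to run the rank-one closed-graph argument of Lemma \ref{3002} in its Jordan form. By the structure theorem of \cite{higher2} recalled just before the statement, there is a sequence $\{\delta_i\}_{i\in\mathbb{N}}$ of Jordan derivations on $\mathcal{A}$ for which each $d_n$ is a fixed finite linear combination of compositions $\delta_{r_1}\cdots\delta_{r_i}$ with $\sum_{j=1}^i r_j=n$. Since the composition of bounded operators is bounded and a finite linear combination of bounded operators is bounded, it suffices to prove that each single Jordan derivation $\delta=\delta_i\colon\mathcal{A}\to B(X)$ is bounded; the boundedness of every $d_n$, and hence of $\{d_i\}$, then follows termwise.

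To show a Jordan derivation $\delta$ is bounded I would invoke the closed graph theorem as in Lemma \ref{3002}: it is enough to prove that $A_n\to A$ and $\delta(A_n)\to B$ force $\delta(A)=B$. The proof of Lemma \ref{3002} used the full Leibniz rule on the product $(x\otimes f)A_n(y\otimes g)$, which is unavailable for a Jordan derivation, so I would replace it by the standard Herstein-type identity
\[
\delta(ACB+BCA)=\delta(A)CB+A\delta(C)B+AC\delta(B)+\delta(B)CA+B\delta(C)A+BC\delta(A),
\]
valid for every Jordan derivation (char $\mathbb{F}=0$), applied with $A=x\otimes f$ and $B=y\otimes g$ two rank-one operators in $\mathcal{A}$ and $C=A_n$. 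Since $(x\otimes f)A_n(y\otimes g)=f(A_ny)\,x\otimes g$ and $(y\otimes g)A_n(x\otimes f)=g(A_nx)\,y\otimes f$, the left-hand side has an obvious limit, and comparing the limit of the identity with the same identity applied directly to $A$ cancels every term except the unknown ones, yielding
\[
(x\otimes f)B(y\otimes g)+(y\otimes g)B(x\otimes f)=(x\otimes f)\delta(A)(y\otimes g)+(y\otimes g)\delta(A)(x\otimes f).
\]

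The main obstacle is precisely this symmetrization: unlike the Leibniz case, the Jordan identity only delivers the symmetric sum, so one must still separate $(x\otimes f)B(y\otimes g)$ from $(y\otimes g)B(x\otimes f)$. Writing the two sides as $f(By)\,x\otimes g+g(Bx)\,y\otimes f$ and $f(\delta(A)y)\,x\otimes g+g(\delta(A)x)\,y\otimes f$, I would choose the rank ones so that $x\otimes g$ and $y\otimes f$ are linearly independent and equate coefficients, obtaining $f\big((B-\delta(A))y\big)=0$ and $g\big((B-\delta(A))x\big)=0$; the handful of degenerate pairs are handled by a separate choice of $y\otimes g$. Finally, letting $y$ range over $\{y:y\otimes g\in\mathcal{A}\}$ and using $\vee\{y:y\otimes g\in\mathcal{A}\}=X$ gives $f\circ(B-\delta(A))=0$ for each admissible $f$, and letting $f$ range with $\wedge\{\ker f:x\otimes f\in\mathcal{A}\}=(0)$ forces $(B-\delta(A))z=0$ for all $z$, i.e. $\delta(A)=B$. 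This proves each $\delta_i$ bounded, and the decomposition formula completes the argument.
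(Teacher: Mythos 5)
Your proposal is correct, but at the crucial step it takes a genuinely different route from the paper. Both arguments begin identically: the decomposition from \cite{higher2} expresses each $d_n$ as a fixed finite linear combination of compositions $\delta_{r_1}\cdots\delta_{r_i}$ of Jordan derivations, reducing everything to the boundedness of a single Jordan derivation $\delta$ on $\mathcal{A}$. The paper disposes of that reduction in one line: under the two rank-one density hypotheses, every Jordan derivation on $\mathcal{A}$ is already a derivation by \cite[Theorem 4.1]{LI7}, and boundedness then follows from Lemma \ref{3002}, whose closed-graph argument depends on the full Leibniz rule applied to $(x\otimes f)A_n(y\otimes g)$. You instead prove boundedness of Jordan derivations directly, bypassing the cited theorem: you substitute the Herstein triple-product identity (obtained by linearizing $\delta(aba)=\delta(a)ba+a\delta(b)a+ab\delta(a)$, valid for any Jordan derivation in characteristic $0$) for the Leibniz computation, and you resolve the resulting symmetrized relation $f(Ty)\,x\otimes g+g(Tx)\,y\otimes f=0$, with $T=B-\delta(A)$, by linear independence of the two rank ones; your treatment of the degenerate pairs is sound, since dependence forces $y=\lambda x$, $g=\mu f$ with $\lambda\mu\neq0$, in which case the relation collapses to $2\lambda\mu f(Tx)\,x\otimes f=0$ and still yields $f(Ty)=0$, after which the two density hypotheses give $f\circ T=0$ and then $T=0$ exactly as in the Leibniz case. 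What each approach buys: yours is more self-contained, requiring only the elementary Jordan triple identity rather than the nontrivial external result that Jordan derivations on such algebras are derivations, and it establishes the automatic-continuity statement for Jordan derivations without first upgrading them to derivations; the paper's route is shorter given the reference and obtains the stronger structural fact along the way. One caveat: your limit step tacitly treats $\delta(x\otimes g)$ and $\delta(y\otimes f)$ as defined, i.e. assumes $x\otimes g,\,y\otimes f\in\mathcal{A}$, which is not automatic when $f(y)=g(x)=0$; but the paper's own proof of Lemma \ref{3002} makes exactly the same tacit assumption, so you are at parity with its standard of rigor, though a sentence addressing it would tighten the argument.
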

\begin{proof}
Since every Jordan derivation on $\mathcal{A}$ is a derivation by \cite[Theorem 4.1]{LI7}.
\end{proof}

For a subspace lattice $\mathcal{L}$ of a Banach space $X$ and for $E\in \mathcal{L}$, define
$$E_-=\vee\{F\in \mathcal{L}: F\nsupseteq E\}.$$
Put $$\mathcal{J}(\mathcal{L})=\{K\in \mathcal{L}: K\neq(0)~\mathrm{and}~K_-\neq X\}.$$

\begin{remark}
\emph{ It is well known (see \cite{ORO}) that $x\otimes f\in
\textrm{alg}\mathcal{L}$ if and only if there exists some $K\in
\mathcal{J}(\mathcal{L})$ such that $x\in K$ and $f\in K_-^\perp$.
It follows that if a subspace lattice $ \mathcal{L}$ satisfies
$\vee\{K: K\in \mathcal{J}(\mathcal{L})\}=H$ and
$\wedge\{K_-: K\in \mathcal{J}(\mathcal{L})\}=(0)$, then  $alg\mathcal{L}$
satisfies the hypothesis of  Theorem \ref{3003}. Such subspace lattices include
completely distributive subspace lattices, $\mathcal{J}$-subspace lattices, and
subspace lattices with $H_-\neq H$ and $(0)_+\neq (0)$.
Recall that (see \cite{SRL}), a subspace lattice $\mathcal{L}$ is called
\textit{completely~distributive} if $L=\vee\{E\in \mathcal{L}:
E_-\ngeq L\}$ and $L=\wedge\{E_-: E\in
\mathcal{L}~\mathrm{and}~E\nleq L\}$  for all $L\in \mathcal{L}$. It
follows that completely distributive subspace lattices satisfy the
conditions $\vee\{K: K\in \mathcal{J}(\mathcal{L})\}=H$ and
$\wedge\{K_-: K\in \mathcal{J}(\mathcal{L})\}=(0)$. A subspace lattice $\mathcal{L}$ is called a
\textit{$\mathcal{J}$-subspace lattice} on $H$ if $\vee\{K: K\in
\mathcal{J}(\mathcal{L})\}=H$, $\wedge\{K_-: K\in
\mathcal{J}(\mathcal{L})\}=(0)$, $K\vee K_-=H$ and $K\wedge
K_-=(0)~\mathrm{for~any}~K\in \mathcal{J}(\mathcal{L})$.}
\end{remark}

On October 22, 2011, in the Third Operator Theory and Operator
Algebras Conference of China, the first author reported main results
of the paper.

\section*{Acknowledgement}
This work is supported by NSF of China and the Ruth and Ted Braun
Fellowship from the Saginaw Community Foundation.

\end{document}